\newtheorem{theorem}{Theorem}[section]
\newtheorem{lemma}{Lemma}[section]
\newtheorem{remark}{Remark}[section]
\newtheorem{corollary}{Corollary}[section]
\newtheorem{proposition}{Proposition}[section]
\numberwithin{equation}{section}
\begin{document}
	
\title{Further Properties of PPT  and Hyponormal Matrices}
\author{Hamid Reza Moradi, Ibrahim Halil G\"um\"u\c s, and  Mohammad Sababheh}
\subjclass[2010]{Primary 15A45, 47A30; Secondary 15A42, 47A63, 47B65}
\keywords{positive partial transpose, arithmetic-geometric mean inequality, hyponormal matrices}

\begin{abstract}
This paper discusses further properties of positive partial transpose matrices, with applications towards hyponormal, semi-hyponormal, and $(\alpha,\beta)$-normal matrices. The obtained results present extensions and improvements of many results in the literature.
\end{abstract}

\maketitle
\pagestyle{myheadings}
\markboth{\centerline {}}
{\centerline {}}
\bigskip
\bigskip

\section{Introduction and Preliminaries}

In the sequel, $\mathcal{M}_n$ denotes the algebra of all $n\times n$ complex matrices. When $A\in\mathcal{M}_n$ is such that $\left<Ax,x\right>\geq 0$ for all $x\in\mathbb{C}^n$, we say that  $A$ is positive semidefinite, and we write $A\geq O$. On the other hand, if $\left<Ax,x\right>>0$ for all nonzero $x\in\mathbb{C}^n$, $A$ is said to be positive definite, and we write $A>O$. Here, $O$ denotes the zero element of $\mathcal{M}_n$.

Given $A,B,X\in\mathcal{M}_n$,  the matrix $\left[\begin{matrix}A&X^*\\ X&B\end{matrix}\right]$ is in $\mathcal{M}_{2n}.$ It is well known that if $\left[ \begin{matrix}
   A & {{X}^{*}}  \\
   X & B  \\
\end{matrix} \right]\geq O$,  then
\begin{equation}\label{8}
\left\| \left[ \begin{matrix}
   A & {{X}^{*}}  \\
   X & B  \\
\end{matrix} \right] \right\|\le \left\| A \right\|+\left\| B \right\|,
\end{equation}
where $\|\cdot\|$ is the usual operator norm. Indeed, \eqref{8} follows from the following useful decomposition \cite[Lemma 3.4]{2}: For every matrix $\left[ \begin{matrix}
   A & {{X}^{*}}  \\
   X & B  \\
\end{matrix} \right]\ge O$, we have 
	\[\left[ \begin{matrix}
   A & {{X}^{*}}  \\
   X & B  \\
\end{matrix} \right]=U\left[ \begin{matrix}
   A & O  \\
   O & O  \\
\end{matrix} \right]{{U}^{*}}+V\left[ \begin{matrix}
   O & O  \\
   O & B  \\
\end{matrix} \right]{{V}^{*}}\]
for some unitaries $U,V$.

However, if $\left[ \begin{matrix}
   A & {{X}^{*}}  \\
   X & B  \\
\end{matrix} \right]\ge O$ and the off-diagonal block $X$ is Hermitian,  Hiroshima \cite{7} showed a stronger inequality than \eqref{8}, as follows
\begin{equation}\label{9}
\left\| \left[ \begin{matrix}
   A & X  \\
   X & B  \\
\end{matrix} \right] \right\|\le \left\| A+B \right\|.
\end{equation}

The block matrix  $ \left[ \begin{matrix}
   A & {{X}^{*}}  \\
   X & B  \\
\end{matrix} \right]$ has received the attention of numerous researchers in the literature due to its usability and applications. For example,  $ \left[ \begin{matrix}
   A & {{X}^{*}}  \\
   X & B  \\
\end{matrix} \right]$ is said to be positive partial transpose (PPT) if both $\left[ \begin{matrix}
   A & X  \\
   {{X}^{*}} & B  \\
\end{matrix} \right]$ and $\left[ \begin{matrix}
   A & {{X}^{*}}  \\
   X & B  \\
\end{matrix} \right]$ are positive semidefinite.

If $\left[ \begin{matrix}
   A & {{X}^{*}}  \\
   X & B  \\
\end{matrix} \right]$ is PPT, Lee \cite[Theorem 2.1]{6} showed that
\begin{equation}\label{7}
\left| X \right|\le \frac{A\sharp B+{{U}^{*}}(A\sharp B)U}{2},
\end{equation}
where   $X=U\left| X \right|$ is the polar decomposition of $X$, and $\sharp$ is the matrix geometric mean. Recall, here, that if $A,B>O$ and $0<t<1$, the weighted geometric mean of $A$ and $B$ is defined by 
\begin{align*}
A\sharp_t B=A^{\frac{1}{2}}\left(A^{-\frac{1}{2}}BA^{-\frac{1}{2}}\right)^tA^{\frac{1}{2}}.
\end{align*}
When $t=\frac{1}{2},$ we simply write $A\sharp B$ instead of $A\sharp_{\frac{1}{2}}B$. The geometric mean $\sharp$ was first
introduced by Pusz and Woronowicz \cite{pw}, which was further developed into a general theory of operator means by Kubo and Ando \cite{ka}.

Fu et al. \cite[Theorem 2.3]{3}  improved  \eqref{7} as follows
\begin{equation}\label{17}
\left| X \right|\le \left( A\sharp B \right)\sharp \left( {{U}^{*}}\left( A\sharp B \right)U \right).
\end{equation}
The fact that \eqref{17} improves \eqref{7} follows from the arithmetic-geometric mean inequality that states 
\begin{align}\label{eq_amgm}
A\sharp B\leq \frac{A+B}{2},
\end{align}
for any $A,B\in\mathcal{M}_n$ with $A,B>O.$

Among those useful characterizations of the geometric mean, we have \cite[(4.15)]{11})
\begin{equation}\label{4}
X\sharp Y=\max \left\{ Z:\text{ }Z={{Z}^{*}},\left[ \begin{matrix}
   X & Z  \\
   Z & Y  \\
\end{matrix} \right]\ge O \right\}; X,Y>O.
\end{equation} 
Recently, the following lemma has been shown in \cite[Theorem 2.1]{10}.
\begin{lemma}\label{2}
 If $\left[ \begin{matrix}
   A & X  \\
   {{X}^{*}} & B  \\
\end{matrix} \right]$ is PPT, then so is $\left[ \begin{matrix}
   A{{\sharp }_{t}}B & X  \\
   {{X}^{*}} & A{{\sharp}_{1-t}}B  \\
\end{matrix} \right]$, $0\le t\le 1$.

\end{lemma}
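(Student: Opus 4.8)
The plan is to derive the lemma from a symmetric \emph{blockwise stability} fact together with the elementary identity $A\sharp_{1-t}B=B\sharp_t A$. Recall that ``$\left[\begin{matrix}A & X\\ X^* & B\end{matrix}\right]$ is PPT'' means precisely that both $\left[\begin{matrix}A & X\\ X^* & B\end{matrix}\right]\ge O$ and its partial transpose $\left[\begin{matrix}A & X^*\\ X & B\end{matrix}\right]\ge O$. Conjugating each of these by the self-adjoint flip unitary $J=\left[\begin{matrix}O & I\\ I & O\end{matrix}\right]$ shows in addition that $\left[\begin{matrix}B & X\\ X^* & A\end{matrix}\right]\ge O$ and $\left[\begin{matrix}B & X^*\\ X & A\end{matrix}\right]\ge O$; all four of these positivity relations will be used below.

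First I would reduce to the case $A,B>O$: replacing $A,B$ by $A+\varepsilon I$ and $B+\varepsilon I$ keeps $\left[\begin{matrix}A & X\\ X^* & B\end{matrix}\right]$ and its partial transpose positive semidefinite (one merely adds $\varepsilon I_{2n}$) while perturbing $\sharp_t$ continuously, so the general case follows by letting $\varepsilon\to 0^{+}$.

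The heart of the argument is the following claim: if $P_0,P_1,Q_0,Q_1>O$ and $\left[\begin{matrix}P_i & Y\\ Y^* & Q_i\end{matrix}\right]\ge O$ for $i=0,1$, then $\left[\begin{matrix}P_0\sharp_t P_1 & Y\\ Y^* & Q_0\sharp_t Q_1\end{matrix}\right]\ge O$. By the Schur complement criterion the hypotheses read $Q_i\ge Y^*P_i^{-1}Y$. Using operator monotonicity of $(C,D)\mapsto C\sharp_t D$, then the standard inequality $\Phi(C\sharp_t D)\le\Phi(C)\sharp_t\Phi(D)$ for the positive linear map $\Phi(Z)=Y^*ZY$, and finally the identity $(P_0\sharp_t P_1)^{-1}=P_0^{-1}\sharp_t P_1^{-1}$, one obtains
\[
Q_0\sharp_t Q_1\ \ge\ (Y^*P_0^{-1}Y)\sharp_t(Y^*P_1^{-1}Y)\ \ge\ Y^*\!\left(P_0^{-1}\sharp_t P_1^{-1}\right)\!Y\ =\ Y^*(P_0\sharp_t P_1)^{-1}Y ,
\]
and the Schur complement criterion (now with $P_0\sharp_t P_1>O$) gives the claim. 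To finish, apply the claim twice: once with $Y=X$, $(P_0,Q_0)=(A,B)$, $(P_1,Q_1)=(B,A)$ — legitimate by the four positivity relations above — to get $\left[\begin{matrix}A\sharp_t B & X\\ X^* & B\sharp_t A\end{matrix}\right]\ge O$, and once with $Y=X^*$ and the same $P_i,Q_i$ to get $\left[\begin{matrix}A\sharp_t B & X^*\\ X & B\sharp_t A\end{matrix}\right]\ge O$. Since $B\sharp_t A=A\sharp_{1-t}B$, these two relations say exactly that $\left[\begin{matrix}A\sharp_t B & X\\ X^* & A\sharp_{1-t}B\end{matrix}\right]$ is PPT.

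I expect the main obstacle to be the second inequality in the displayed chain. When $Y$ is invertible it is an equality, by congruence-invariance of the weighted geometric mean, but in general $Y$ may be singular, so one must invoke the positive-linear-map inequality $\Phi(C\sharp_t D)\le\Phi(C)\sharp_t\Phi(D)$ (or approximate $Y$ by invertible matrices and pass to the limit). The remaining ingredients — the two Schur complement reductions, the perturbation argument, and the identities $(P_0\sharp_t P_1)^{-1}=P_0^{-1}\sharp_t P_1^{-1}$ and $A\sharp_{1-t}B=B\sharp_t A$ — are routine.
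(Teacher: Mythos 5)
Your argument is correct, but note that the paper itself gives no proof of this lemma: it is quoted directly from \cite{10} (``Recently, the following lemma has been shown in [10, Theorem 2.1]''), so there is no in-paper proof to compare against. Judged on its own, your proof is sound and complete. The four positivity relations obtained from the PPT hypothesis together with conjugation by the flip unitary are exactly what is needed; the reduction to $A,B>O$ by adding $\varepsilon I$ is legitimate, since for semidefinite matrices $A\sharp_t B$ is by definition the (decreasing) limit of $(A+\varepsilon I)\sharp_t(B+\varepsilon I)$ and positivity passes to the limit; and the key claim is established correctly: the Schur-complement reformulation $Q_i\ge Y^*P_i^{-1}Y$, joint monotonicity of $\sharp_t$, the inequality $Y^*\left(C\sharp_t D\right)Y\le \left(Y^*CY\right)\sharp_t\left(Y^*DY\right)$, and the identity $\left(P_0\sharp_t P_1\right)^{-1}=P_0^{-1}\sharp_t P_1^{-1}$ all hold, and the two instantiations $(P_0,Q_0)=(A,B)$, $(P_1,Q_1)=(B,A)$ with $Y=X$ and $Y=X^*$, combined with $B\sharp_t A=A\sharp_{1-t}B$, give precisely the positivity of the new block matrix and of its partial transpose. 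The obstacle you flag (possibly singular $Y$) is already absorbed by the fact you invoke, since $Z\mapsto Y^*ZY$ is a positive linear map for arbitrary $Y$, so no separate approximation of $Y$ is needed. Unwinding your claim in the two instances shows it amounts to $A\sharp_{1-t}B\ge X^*\left(A\sharp_t B\right)^{-1}X$ and $A\sharp_{1-t}B\ge X\left(A\sharp_t B\right)^{-1}X^*$, which is the natural Schur-complement route by which results of this type are proved in the literature; so your write-up should be viewed as a faithful self-contained reconstruction of the cited result rather than a genuinely different argument.
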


Further, it has been shown in \cite[Corollary 2.2]{6} that  
\begin{equation}\label{eq_lee}
{{\lambda }_{j}}\left( 2\left| X \right|-A{{\sharp }_{t}}B \right)\le {{\lambda }_{j}}\left( A{{\sharp }_{1-t}}B \right),
\end{equation}
when $\left[ \begin{matrix}
   A & {{X}^{*}}  \\
   X & B  \\
\end{matrix} \right]$ is PPT. Here $\lambda_j$ denotes the $j^{\text{th}}$ largest eigenvalue.

This paper discusses extensions of \eqref{17} and \eqref{eq_lee}, where we extend both inequalities to the weighted geometric mean. We will also extend  \cite[Theorem 2.2]{3}, where we show that when $\left[ \begin{matrix}
   A & X  \\
   {{X}^{*}} & B  \\
\end{matrix} \right]\geq O$, then $\left| X \right|\le \left( A{{\sharp }_{t}}{{U}^{*}}BU \right)\sharp \left( A{{\sharp}_{1-t}}{{U}^{*}}BU \right)$, for example. Further, we give an improvement of \eqref{8} that is different from \eqref{9}. Many other consequences for PPT matrices and positive semidefinite matrices of the form $\left[ \begin{matrix}
   A & X  \\
   {{X}^{*}} & B  \\
\end{matrix} \right]$ will be presented.

After that, we present some applications that include semi-hyponormal and $(\alpha,\beta)$-normal matrices. Here we recall that if $T\in\mathcal{M}_n$ is such that $|T^*|^2\leq |T|^2$, then $T$ is said to be hyponormal. If $T$ satisfies the weaker condition $|T^*|\leq |T|$, then $T$ is said to be semi-hyponormal. More generally, if $0\le \alpha \le 1\le \beta$ are such that $\alpha^2|T|^2\leq |T^*|^2\leq\beta^2|T|^2$, then $T$ is said to be $(\alpha,\beta)$-normal. For example, we will show that\[\left[ \begin{matrix}
   \left| T \right| & {{T}^{*}}  \\
   T & \left| T \right|  \\
\end{matrix} \right]\ge O\] if and only if $T$ is semi-hyponormal. Many other results and consequences will be shown for these classes. As a consequence, we will be able to present a possible reverse for the inequality $\|T^2\|\leq \|T\|^2$, when $T$ is $(\alpha,\beta)$-normal.

\section{Main Results for positive and PPT block matrices}
Our first result is an extension of  \cite[Theorem 2.2]{3}, which states that if  $\left[ \begin{matrix}
   A & X  \\
   {{X}^{*}} & B  \\
\end{matrix} \right]\geq O$, then 
\[\left| X \right|\le A\sharp {{U}^{*}}BU\text{ and }\left| {{X}^{*}} \right|\le UA{{U}^{*}}\sharp B.\]
Once this has been shown, we use it to extend some results about PPT matrices.
\begin{theorem}\label{5}
Let  $\left[ \begin{matrix}
   A & X  \\
   {{X}^{*}} & B  \\
\end{matrix} \right]\geq O$ with $A,B,X\in {{\mathcal M}_{n}}$ and let $X=U\left| X \right|$ be the polar decomposition of $X$. Then  for any $0\le t\le 1$,
\begin{equation}\label{16}
\left| X \right|\le \left( A{{\sharp }_{t}}{{U}^{*}}BU \right)\sharp \left( A{{\sharp}_{1-t}}{{U}^{*}}BU \right)
\end{equation}
and
\begin{equation}\label{15}
\left| {{X}^{*}} \right|\le \left( UA{{U}^{*}}{{\sharp}_{t}}B \right)\sharp \left( UA{{U}^{*}}{{\sharp}_{1-t}}B \right).
\end{equation}
In particular,
\[\left| X \right|\le A\sharp {{U}^{*}}BU\text{ and }\left| {{X}^{*}} \right|\le UA{{U}^{*}}\sharp B.\]
\end{theorem}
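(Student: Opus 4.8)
The plan is to move, by a block-diagonal unitary congruence built from $U$, to a block matrix whose \emph{off-diagonal block is the Hermitian matrix} $\left| X \right|$ (respectively $\left| X^{*} \right|$) --- a situation in which $\sharp$ is extremal by \eqref{4} --- and then to invoke Lemma \ref{2}.

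I may take $U$ to be unitary (the polar factor always extends to one), and since \eqref{4} and the definition of $\sharp_{t}$ were recorded only for positive \emph{definite} matrices, I would at the very end pass from $A,B>O$ to general positive semidefinite $A,B$ by applying the result to $A+\varepsilon I$, $B+\varepsilon I$ (which merely adds $\varepsilon I$ to the block matrix, keeping it positive) and letting $\varepsilon\downarrow0$, using continuity of the weighted geometric means. Granting this, conjugating the hypothesis $\left[ \begin{matrix} A & X^{*} \\ X & B \end{matrix} \right]\ge O$ by $\operatorname{diag}(I,U^{*})$ gives
\[
\left[ \begin{matrix} I & O \\ O & U^{*} \end{matrix} \right]\left[ \begin{matrix} A & X^{*} \\ X & B \end{matrix} \right]\left[ \begin{matrix} I & O \\ O & U \end{matrix} \right]=\left[ \begin{matrix} A & \left| X \right| \\ \left| X \right| & U^{*}BU \end{matrix} \right]\ge O ,
\]
since $X^{*}U=\left| X \right|U^{*}U=\left| X \right|$ and $U^{*}X=U^{*}U\left| X \right|=\left| X \right|$. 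Because its off-diagonal block is Hermitian, the right-hand matrix equals its own partial transpose, hence is PPT.

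Now Lemma \ref{2}, applied with the triple $(A,B,X)$ there replaced by $(A,\,U^{*}BU,\,\left| X \right|)$, shows that
\[
\left[ \begin{matrix} A\,\sharp_{t}\,U^{*}BU & \left| X \right| \\ \left| X \right| & A\,\sharp_{1-t}\,U^{*}BU \end{matrix} \right]
\]
is PPT --- in particular positive semidefinite --- for every $0\le t\le1$; as its off-diagonal block is again Hermitian, \eqref{4} yields
\[
\left| X \right|\le \bigl( A\,\sharp_{t}\,U^{*}BU \bigr)\,\sharp\,\bigl( A\,\sharp_{1-t}\,U^{*}BU \bigr),
\]
which is \eqref{16}; putting $t=0$ and using $C\,\sharp_{0}\,D=C$, $C\,\sharp_{1}\,D=D$ collapses the right-hand side to $A\,\sharp\,U^{*}BU$, the asserted special case. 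Conjugating the hypothesis instead by $\operatorname{diag}(U,I)$ and using $UX^{*}=XU^{*}=U\left| X \right|U^{*}=\left| X^{*} \right|$ gives $\left[ \begin{matrix} UAU^{*} & \left| X^{*} \right| \\ \left| X^{*} \right| & B \end{matrix} \right]\ge O$, and the same two invocations (Lemma \ref{2}, then \eqref{4}) give $\left| X^{*} \right|\le\bigl( UAU^{*}\,\sharp_{t}\,B \bigr)\,\sharp\,\bigl( UAU^{*}\,\sharp_{1-t}\,B \bigr)$, which is \eqref{15}; the case $t=0$ reads $\left| X^{*} \right|\le UAU^{*}\,\sharp\,B$.

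The only step I expect to require genuine care is the pair of reductions at the start --- taking $U$ unitary and the $\varepsilon$-regularisation that lets \eqref{4} and Lemma \ref{2} be quoted for positive definite data. Once that framework is in place, each remaining step is a single application of a fact already recorded in Section 1.
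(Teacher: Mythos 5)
Your proof is correct and follows essentially the same route as the paper: conjugate by $\operatorname{diag}(I,U)$ so that the off-diagonal block becomes the Hermitian matrix $\left| X \right|$, apply Lemma \ref{2}, and conclude with the maximality characterization \eqref{4} of the geometric mean. The only differences are minor: you obtain \eqref{15} by the symmetric conjugation $\operatorname{diag}(U,I)$ of the hypothesis rather than by conjugating \eqref{16} with $U$ as the paper does, you take $t=0$ instead of $t=\tfrac{1}{2}$ for the ``in particular'' statement, and your $\varepsilon$-regularisation makes explicit the passage from positive definite to positive semidefinite $A,B$, a point the paper leaves implicit.
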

\begin{proof}
We prove  \eqref{16}. Since $X=U\left| X \right|$ is the polar decomposition of  $X$, we have
\[\left[ \begin{matrix}
   I & O  \\
   O & {{U}^{*}}  \\
\end{matrix} \right]\left[ \begin{matrix}
   A & {{X}^{*}}  \\
   X & B  \\
\end{matrix} \right]\left[ \begin{matrix}
   I & O  \\
   O & U  \\
\end{matrix} \right]=\left[ \begin{matrix}
   A & {{X}^{*}}U  \\
   {{U}^{*}}X & {{U}^{*}}BU  \\
\end{matrix} \right]\ge O,\]
which implies
\[\left[ \begin{matrix}
   A & \left| X \right|  \\
   \left| X \right| & {{U}^{*}}BU  \\
\end{matrix} \right]\ge O,\]
since ${{X}^{*}}U={{U}^{*}}X=\left| X \right|$. Lemma \ref{2} gives
\[\left[ \begin{matrix}
   A{{\sharp}_{t}}{{U}^{*}}BU & \left| X \right|  \\
   \left| X \right| & A{{\sharp}_{1-t}}{{U}^{*}}BU  \\
\end{matrix} \right]\ge O.\]
Now, the result follows from \eqref{4}. To prove  \eqref{15}, we have
\[\begin{aligned}
   \left| {{X}^{*}} \right|&=U\left| X \right|{{U}^{*}} \quad \text{(by \cite[p. 58]{9})}\\ 
 & \le U\left( \left( A{{\sharp }_{t}}{{U}^{*}}BU \right)\sharp \left( A{{\sharp }_{1-t}}{{U}^{*}}BU \right) \right){{U}^{*}} \\ 
 & =U\left( A{{\sharp}_{t}}{{U}^{*}}BU \right){{U}^{*}}\sharp U\left( A{{\sharp}_{1-t}}{{U}^{*}}BU \right){{U}^{*}} \\ 
 & =\left( UA{{U}^{*}}{{\sharp }_{t}}BU \right)\sharp \left( UA{{U}^{*}}{{\sharp }_{1-t}}B \right).  
\end{aligned}\]
This proves \eqref{15}. Letting $t=\frac{1}{2}$ in \eqref{16} and \eqref{15}, and noting that $T\sharp T=T,$ when $T>O$, imply $\left| X \right|\le A\sharp {{U}^{*}}BU\text{ and }\left| {{X}^{*}} \right|\le UA{{U}^{*}}\sharp B.$ This completes the proof.
\end{proof}

Using Theorem \ref{5}, we present the following extension of \eqref{17}.
\begin{corollary}\label{6}
Let $A,B,X\in {{\mathcal M}_{n}}$ be  such that $\left[ \begin{matrix}
   A & {{X}^{*}}  \\
   X & B  \\
\end{matrix} \right]$ is PPT, and let $X=U\left| X \right|$ be the polar decomposition of  $X$. Then for any $0\le t\le 1$,
\[\left| X \right|\le \left( A{{\sharp}_{t}}B \right)\sharp \left( {{U}^{*}}\left( A{{\sharp }_{1-t}}B \right)U \right),\]
and
\[\left| {{X}^{*}} \right|\le \left( U\left( A{{\sharp }_{t}}B \right){{U}^{*}} \right)\sharp \left( A{{\sharp}_{1-t}}B \right).\]
\end{corollary}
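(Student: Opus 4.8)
The plan is to deduce Corollary \ref{6} from Theorem \ref{5} by feeding it the PPT hypothesis in the right form. Since $\left[ \begin{matrix} A & X \\ X^* & B \end{matrix} \right]$ is PPT, by Lemma \ref{2} (applied with parameter $t$) the block matrix $\left[ \begin{matrix} A\sharp_t B & X \\ X^* & A\sharp_{1-t}B \end{matrix} \right]$ is again PPT, so in particular it is positive semidefinite. Now I would apply Theorem \ref{5} to this new positive semidefinite block, with $A$ replaced by $A\sharp_t B$ and $B$ replaced by $A\sharp_{1-t}B$, using the \emph{same} unitary $U$ from the polar decomposition $X=U|X|$ (which is unchanged since the off-diagonal block is still $X$). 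Taking the parameter inside Theorem \ref{5} to be $\tfrac12$ — that is, using the ``in particular'' conclusion $|X|\le A'\sharp U^*B'U$ of Theorem \ref{5} — gives directly
\[
|X|\le (A\sharp_t B)\sharp\big(U^*(A\sharp_{1-t}B)U\big),
\]
which is the first claimed inequality.

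For the second inequality I would argue symmetrically. From the PPT hypothesis, $\left[ \begin{matrix} A & X \\ X^* & B \end{matrix} \right]$ PPT also means $\left[ \begin{matrix} A & X^* \\ X & B \end{matrix} \right]\ge O$; equivalently, after the swap, $\left[ \begin{matrix} B & X \\ X^* & A \end{matrix} \right]$ is PPT. Writing the polar decomposition of $X^*$ as $X^*=V|X^*|$, one has $V=U^*$ (indeed $|X^*|=U|X|U^*$ and $X^*=|X|U^*=U^*(U|X|U^*)=U^*|X^*|$). Then the same two steps — Lemma \ref{2} to get $\left[ \begin{matrix} B\sharp_t A & X^* \\ X & B\sharp_{1-t}A \end{matrix} \right]$ PPT (equivalently in the needed orientation), followed by the ``in particular'' part of Theorem \ref{5} — yield
\[
|X^*|\le (B\sharp_{1-t}A)\sharp\big(U(B\sharp_t A)U^*\big)=\big(U(A\sharp_t B)U^*\big)\sharp(A\sharp_{1-t}B),
\]
after relabeling $t\leftrightarrow 1-t$ and using that $\sharp$ is symmetric in its two arguments. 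Alternatively, and perhaps more cleanly, I would obtain the second inequality from the first exactly as in the proof of Theorem \ref{5}: conjugate the first inequality by $U$ on the left and $U^*$ on the right, use $U|X|U^*=|X^*|$, and use that unitary conjugation commutes with the geometric mean, $U(P\sharp Q)U^*=(UPU^*)\sharp(UQU^*)$.

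The only genuine points requiring care are bookkeeping ones: making sure the polar-decomposition unitary is literally the same $U$ when Theorem \ref{5} is reapplied (it is, since the off-diagonal entry is untouched by Lemma \ref{2}), and tracking the index $t$ versus $1-t$ and the orientation of the block (whether $X$ or $X^*$ sits in the lower-left corner) through the symmetry argument for $|X^*|$. There is no analytic obstacle; the substance of the corollary is entirely contained in Theorem \ref{5} and Lemma \ref{2}, and the proof is a short two-line reduction for each inequality.
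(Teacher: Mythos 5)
Your proposal is correct and follows essentially the same route as the paper: Lemma \ref{2} upgrades the PPT hypothesis to the block $\left[\begin{smallmatrix} A\sharp_t B & X^* \\ X & A\sharp_{1-t}B\end{smallmatrix}\right]$, and the ``in particular'' case of Theorem \ref{5} (with the same unitary $U$, since the off-diagonal block is unchanged) gives the first inequality. For the second inequality, your ``alternative'' argument --- conjugating by $U$, using $U|X|U^*=|X^*|$ and $U(P\sharp Q)U^*=(UPU^*)\sharp(UQU^*)$ --- is precisely the paper's method (the method of \eqref{15}), so apart from the optional symmetric variant with its $t\leftrightarrow 1-t$ bookkeeping, there is no substantive difference.
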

\begin{proof}
Lemma \ref{2} ensures that $\left[ \begin{matrix}
   A{{\sharp}_{t}}B & {{X}^{*}}  \\
   X & A{{\sharp}_{1-t}}B  \\
\end{matrix} \right]$ is also PPT, for any $0\le t\le 1$. By Theorem \ref{5}, we conclude the first desired result.

The second inequality can be shown using the method we used to show  \eqref{15}. This completes the proof.
\end{proof}

Interestingly, Corollary \ref{6} can be used to present a weighted version of \eqref{eq_lee}, as follows.
\begin{corollary}\label{26}
Let $A,B,X\in {{\mathcal M}_{n}}$ be such that $\left[ \begin{matrix}
   A & {{X}^{*}}  \\
   X & B  \\
\end{matrix} \right]$ is PPT. Then for any $0\le t\le 1$,
\[{{\lambda }_{j}}\left( 2\left| X \right|-A{{\sharp }_{t}}B \right)\le {{\lambda }_{j}}\left( A{{\sharp }_{1-t}}B \right)\]
and
\[{{\lambda }_{j}}\left( 2\left| {{X}^{*}} \right|-A{{\sharp}_{1-t}}B \right)\le {{\lambda }_{j}}\left( A{{\sharp}_{t}}B \right)\]
for all $j=1,2,\ldots ,n$.
\end{corollary}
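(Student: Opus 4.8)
The plan is to combine Corollary~\ref{6} with the arithmetic--geometric mean inequality \eqref{eq_amgm} and eigenvalue monotonicity.

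First I would invoke Corollary~\ref{6} to obtain the two operator inequalities
\[\left| X \right|\le \left( A{{\sharp }_{t}}B \right)\sharp \left( {{U}^{*}}\left( A{{\sharp }_{1-t}}B \right)U \right)\quad\text{and}\quad \left| {{X}^{*}} \right|\le \left( U\left( A{{\sharp }_{t}}B \right){{U}^{*}} \right)\sharp \left( A{{\sharp }_{1-t}}B \right).\]
Applying \eqref{eq_amgm} to the outer geometric mean on each right-hand side (the two positive matrices being $A\sharp_t B$ and $U^*(A\sharp_{1-t}B)U$ in the first case, and $U(A\sharp_t B)U^*$ and $A\sharp_{1-t}B$ in the second) and then rearranging, I get
\[2\left| X \right|-A\sharp_t B\le U^*(A\sharp_{1-t}B)U\qquad\text{and}\qquad 2\left| {{X}^{*}} \right|-A\sharp_{1-t}B\le U(A\sharp_t B)U^*.\]

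Next I would apply Weyl's monotonicity principle, namely $P\le Q\Rightarrow\lambda_j(P)\le\lambda_j(Q)$ for every $j$, to these two inequalities. This reduces the problem to checking that conjugation by the partial isometry $U$ from the polar decomposition $X=U|X|$ does not increase the eigenvalues of a positive semidefinite matrix, that is, $\lambda_j(U^*CU)\le\lambda_j(C)$ and $\lambda_j(UCU^*)\le\lambda_j(C)$ whenever $C\ge O$. For the first, observe that $U^*CU=(C^{1/2}U)^*(C^{1/2}U)$ has the same eigenvalues as $(C^{1/2}U)(C^{1/2}U)^*=C^{1/2}UU^*C^{1/2}$, and $C^{1/2}UU^*C^{1/2}\le C$ since $UU^*\le I$; one more use of monotonicity gives $\lambda_j(U^*CU)\le\lambda_j(C)$, and the case of $UCU^*$ is symmetric. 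Taking $C=A\sharp_{1-t}B$ in the first chain and $C=A\sharp_t B$ in the second yields precisely
\[\lambda_j\!\left(2\left| X \right|-A\sharp_t B\right)\le\lambda_j\!\left(A\sharp_{1-t}B\right)\quad\text{and}\quad\lambda_j\!\left(2\left| {{X}^{*}} \right|-A\sharp_{1-t}B\right)\le\lambda_j\!\left(A\sharp_t B\right),\]
as claimed.

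The step I expect to need the most care is the last one: since $U$ is only a partial isometry when $X$ is singular, $U^*(A\sharp_{1-t}B)U$ need not be unitarily equivalent to $A\sharp_{1-t}B$, so its eigenvalues cannot simply be declared equal. The singular-value identity above settles this; alternatively one may argue by continuity, replacing $X$ with $X+\varepsilon U$ to render $U$ unitary and letting $\varepsilon\to0^{+}$. When $X$ is invertible this subtlety does not arise, as $U$ is then genuinely unitary.
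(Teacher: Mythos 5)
Your proposal is correct and follows essentially the same route as the paper: Corollary~\ref{6}, the arithmetic--geometric mean inequality \eqref{eq_amgm}, rearrangement, and Weyl's monotonicity principle, with the eigenvalues of $U^{*}(A\sharp_{1-t}B)U$ compared to those of $A\sharp_{1-t}B$ at the end (the paper leaves the second inequality to the reader, which you write out symmetrically). Your extra care about $U$ being only a partial isometry is harmless but not needed here, since in $\mathcal{M}_n$ the polar decomposition $X=U|X|$ can always be taken with $U$ unitary, as the paper does throughout, so that $\lambda_j(U^{*}CU)=\lambda_j(C)$ exactly.
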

\begin{proof}
Corollary \ref{6} means that
\[\begin{aligned}
   \left| X \right|&\le \left( A{{\sharp }_{t}}B \right)\sharp \left( {{U}^{*}}(A{{\sharp }_{1-t}}B)U \right) \\ 
 & \le \frac{A{{\sharp }_{t}}B+{{U}^{*}}(A{{\sharp }_{1-t}}B)U}{2}.
\end{aligned}\]
Thus,
\[2\left| X \right|-A{{\sharp }_{t}}B\le {{U}^{*}}(A{{\sharp }_{1-t}}B)U.\]
Therefore,
\[{{\lambda }_{j}}\left( 2\left| X \right|-A{{\sharp }_{t}}B \right)\le {{\lambda }_{j}}\left( A{{\sharp }_{1-t}}B \right)\]
as desired.
\end{proof}

Related to the discussion, we employ \eqref{9} to obtain another refinement of  \eqref{8}, as follows.
\begin{theorem}\label{10}
Let $A,B,X\in {{\mathcal M}_{n}}$ be such that $\left[ \begin{matrix}
   A & {{X}^{*}}  \\
   X & B  \\
\end{matrix} \right]\ge O$ and let $X=U\left| X \right|$ be the polar decomposition of $X$. Then
\[\left\| \left[ \begin{matrix}
   A & {{X}^{*}}  \\
   X & B  \\
\end{matrix} \right] \right\|\le \left\| A+{{U}^{*}}BU \right\|.\]
\end{theorem}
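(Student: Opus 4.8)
The plan is to reduce the assertion to Hiroshima's inequality \eqref{9} by a unitary congruence that symmetrizes the off-diagonal block. Since $X\in\mathcal M_n$ is square, its polar factor $U$ in $X=U\left|X\right|$ may be taken to be unitary, so that $W=\left[\begin{matrix} I & O \\ O & U\end{matrix}\right]$ is a unitary in $\mathcal M_{2n}$. Exactly as in the proof of Theorem \ref{5}, and using $X^*U=U^*X=\left|X\right|$, one computes
\[
W^*\left[\begin{matrix} A & X^* \\ X & B\end{matrix}\right]W=\left[\begin{matrix} A & X^*U \\ U^*X & U^*BU\end{matrix}\right]=\left[\begin{matrix} A & \left|X\right| \\ \left|X\right| & U^*BU\end{matrix}\right]\ge O.
\]
In particular the block matrix on the left has the same operator norm as $\left[\begin{matrix} A & \left|X\right| \\ \left|X\right| & U^*BU\end{matrix}\right]$, which is positive semidefinite with \emph{Hermitian} off-diagonal block $\left|X\right|$.

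Having arranged this, I would simply apply \eqref{9} to $\left[\begin{matrix} A & \left|X\right| \\ \left|X\right| & U^*BU\end{matrix}\right]$, obtaining
\[
\left\|\left[\begin{matrix} A & \left|X\right| \\ \left|X\right| & U^*BU\end{matrix}\right]\right\|\le\left\|A+U^*BU\right\|,
\]
and then combine this with the norm identity from the first step to conclude. It is worth remarking that since $\left\|A+U^*BU\right\|\le\left\|A\right\|+\left\|U^*BU\right\|=\left\|A\right\|+\left\|B\right\|$, the bound genuinely refines \eqref{8}, while in general it need not be comparable with the quantity $\left\|A+B\right\|$ appearing in \eqref{9} (the latter requiring $X$ itself to be Hermitian); this is the sense in which the theorem gives a refinement of \eqref{8} different from \eqref{9}.

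As for difficulties, there is essentially no analytic obstacle: all the force of the statement is carried by Hiroshima's theorem \eqref{9}, and the new ingredient is only the observation that the passage from $\left[\begin{matrix} A & X^* \\ X & B\end{matrix}\right]$ to $\left[\begin{matrix} A & \left|X\right| \\ \left|X\right| & U^*BU\end{matrix}\right]$ is a norm-preserving, positivity-preserving unitary congruence. The one point demanding a little care is the choice of the unitary polar factor $U$ when $X$ is singular: the partial isometry in the polar decomposition is then not unique, but for a square $X$ it extends to a unitary $U$, and the identities $X^*U=U^*X=\left|X\right|$ persist for that extension — the same convention already in force in Theorem \ref{5}.
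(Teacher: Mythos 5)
Your proposal is correct and coincides with the paper's own argument: both conjugate by the unitary $\left[\begin{smallmatrix} I & O \\ O & U\end{smallmatrix}\right]$ to replace the off-diagonal block by the Hermitian matrix $\left|X\right|$ and then invoke Hiroshima's inequality \eqref{9}. Your additional remarks on the unitary choice of the polar factor and on the comparison with \eqref{8} are sound but not needed beyond what the paper does.
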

\begin{proof}
If $X=U\left| X \right|$ is the polar decomposition of $X$, then $$\left[ \begin{matrix}
   I & O  \\
   O & {{U}^{*}}  \\
\end{matrix} \right]\left[ \begin{matrix}
   I & O  \\
   O & U  \\
\end{matrix} \right]=\left[ \begin{matrix}
   I & O  \\
   O & U  \\
\end{matrix} \right]\left[ \begin{matrix}
   I & O  \\
   O & {{U}^{*}}  \\
\end{matrix} \right]=\left[ \begin{matrix}
   I & O  \\
   O & I  \\
\end{matrix} \right],$$
since $U$ is unitary. The fact that $\left[ \begin{matrix}
   A & {{X}^{*}}  \\
   X & B  \\
\end{matrix} \right]\ge O$ implies
\[\begin{aligned}
   \left\| \left[ \begin{matrix}
   A & {{X}^{*}}  \\
   X & B  \\
\end{matrix} \right] \right\|&=\left\| \left[ \begin{matrix}
   I & O  \\
   O & {{U}^{*}}  \\
\end{matrix} \right]\left[ \begin{matrix}
   A & {{X}^{*}}  \\
   X & B  \\
\end{matrix} \right]\left[ \begin{matrix}
   I & O  \\
   O & U  \\
\end{matrix} \right] \right\| \\ 
 & =\left\| \left[ \begin{matrix}
   A & \left| X \right|  \\
   \left| X \right| & {{U}^{*}}BU  \\
\end{matrix} \right] \right\| \\ 
 & \le \left\| A+{{U}^{*}}BU \right\|, 
\end{aligned}\]
where we have used \eqref{9} to obtain the last inequality. This completes the proof.
\end{proof}

The following is an interesting characterization related to PPT matrices.
\begin{theorem}\label{12}
Let $A,B,X\in\mathcal{M}_n$ be such that $\left[\begin{matrix} A&X\\X^*&B\end{matrix}\right]$ is PPT and let $0\leq t\leq 1$. Then there are some isometries $\widetilde{U},\widetilde{V}\in {{\mathcal M}_{2n,n}}$ (depending on $t$), such that
$$\left[ \begin{matrix}
   A{{\sharp}_{t}}B & X  \\
   {{X}^{*}} & A{{\sharp}_{1-t}}B  \\
\end{matrix} \right]=\widetilde{U}\left( A{{\sharp}_{t}}B \right){{\widetilde{U}}^{*}}+\widetilde{V}\left( A{{\sharp}_{1-t}}B \right){{\widetilde{V}}^{*}}.$$
 
\end{theorem}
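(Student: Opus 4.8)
The plan is to reduce the claim to the block decomposition \cite[Lemma 3.4]{2} recalled in the introduction; the only preliminary fact needed is that the left-hand matrix is positive semidefinite, which is exactly what Lemma \ref{2} provides. Concretely, I would put $M:=\left[\begin{matrix} A\sharp_t B & X \\ X^* & A\sharp_{1-t}B\end{matrix}\right]$. Since $\left[\begin{matrix} A & X \\ X^* & B\end{matrix}\right]$ is PPT by hypothesis, Lemma \ref{2} shows $M$ is PPT, hence $M\ge O$. (If $A$ or $B$ is singular, so that $\sharp_t$ is understood in its continuous extension, I would first replace $A,B$ by $A+\varepsilon I,B+\varepsilon I$ — which keeps the block matrix PPT — argue for these, and then let $\varepsilon\to 0^+$, using continuity of $\sharp_t$ and compactness of the set of isometries in $\mathcal{M}_{2n,n}$.) Then I would apply \cite[Lemma 3.4]{2} to $M\ge O$, which in its stated form matches $M$ with $A\sharp_t B$ and $A\sharp_{1-t}B$ in the roles of the diagonal blocks: there exist unitaries $U,V\in\mathcal{M}_{2n}$ with
$$M=U\left[\begin{matrix} A\sharp_t B & O \\ O & O\end{matrix}\right]U^*+V\left[\begin{matrix} O & O \\ O & A\sharp_{1-t}B\end{matrix}\right]V^*.$$

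It then remains to absorb $U$ and $V$ into the required isometries. I would introduce the embeddings $E:=\left[\begin{matrix} I_n \\ O\end{matrix}\right]$ and $F:=\left[\begin{matrix} O \\ I_n\end{matrix}\right]$ in $\mathcal{M}_{2n,n}$, which satisfy $E^*E=F^*F=I_n$ and
$$\left[\begin{matrix} A\sharp_t B & O \\ O & O\end{matrix}\right]=E\,(A\sharp_t B)\,E^*,\qquad \left[\begin{matrix} O & O \\ O & A\sharp_{1-t}B\end{matrix}\right]=F\,(A\sharp_{1-t}B)\,F^*.$$
Setting $\widetilde{U}:=UE$ and $\widetilde{V}:=VF$, one has $\widetilde{U}^*\widetilde{U}=E^*U^*UE=E^*E=I_n$ and likewise $\widetilde{V}^*\widetilde{V}=I_n$, so $\widetilde{U},\widetilde{V}\in\mathcal{M}_{2n,n}$ are isometries; substituting the two displayed identities into the formula for $M$ gives exactly $M=\widetilde{U}(A\sharp_t B)\widetilde{U}^*+\widetilde{V}(A\sharp_{1-t}B)\widetilde{V}^*$, which is the assertion.

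I do not expect a genuine obstacle: all the substance sits in Lemma \ref{2} and \cite[Lemma 3.4]{2}, and everything afterwards is the routine identification of the two corner compressions of $M$ with $EE^*$ and $FF^*$. The step that is closest to needing care is the reduction to invertible $A,B$ (only relevant if one insists on the literal domain of $\sharp_t$), together with the bookkeeping verification that $\widetilde{U},\widetilde{V}$ are honest $2n\times n$ isometries.
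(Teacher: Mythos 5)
Your proposal is correct and follows essentially the same route as the paper: Lemma \ref{2} to get positivity of the new block matrix, then \cite[Lemma 3.4]{2}, and finally extraction of the isometries, since your $\widetilde{U}=UE$ and $\widetilde{V}=VF$ are precisely the block columns $\left[\begin{smallmatrix} U_{11}\\ U_{21}\end{smallmatrix}\right]$ and $\left[\begin{smallmatrix} V_{12}\\ V_{22}\end{smallmatrix}\right]$ used in the paper. The only difference is your added $\varepsilon$-perturbation remark for singular $A,B$, a detail the paper does not address.
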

\begin{proof}
By Lemma \ref{2}, $\left[ \begin{matrix}
   A{{\sharp}_{t}}B & X  \\
   X & A{{\sharp}_{1-t}}B  \\
\end{matrix} \right]$ is PPT. From \cite[Lemma 3.4]{2}, there are two unitaries $U, V\in {{\mathcal M}_{2n}}$ partitioned into equally sized matrices,
	\[U=\left[ \begin{matrix}
   {{U}_{11}} & {{U}_{12}}  \\
   {{U}_{21}} & {{U}_{22}}  \\
\end{matrix} \right]\text{ and }V=\left[ \begin{matrix}
   {{V}_{11}} & {{V}_{12}}  \\
   {{V}_{21}} & {{V}_{22}}  \\
\end{matrix} \right]\]
such that
	$$\left[ \begin{matrix}
   A{{\sharp}_{t}}B & X  \\
   {{X}^{*}} & A{{\sharp}_{1-t}}B  \\
\end{matrix} \right]=U\left[ \begin{matrix}
   A{{\sharp}_{t}}B & O  \\
   O & O  \\
\end{matrix} \right]{{U}^{*}}+V\left[ \begin{matrix}
   O & O  \\
   O & A{{\sharp}_{1-t}}B  \\
\end{matrix} \right]{{V}^{*}}.$$
Hence,
	$$\left[ \begin{matrix}
   A{{\sharp}_{t}}B & X  \\
   {{X}^{*}} & A{{\sharp}_{1-t}}B  \\
\end{matrix} \right]=\widetilde{U}\left( A{{\sharp}_{t}}B \right){{\widetilde{U}}^{*}}+\widetilde{V}\left( A{{\sharp}_{1-t}}B \right){{\widetilde{V}}^{*}}$$
where
	\[\widetilde{U}=\left[ \begin{matrix}
   {{U}_{11}}  \\
   {{U}_{21}}  \\
\end{matrix} \right]\text{ and }\widetilde{V}=\left[ \begin{matrix}
   {{V}_{12}}  \\
   {{V}_{22}}  \\
\end{matrix} \right]\]
are isometries. This completes the proof.
\end{proof}

 Theorem \ref{12} implies the following remarkable result.
\begin{corollary}\label{13}
Let $A,B,X\in {{\mathcal M}_{n}}$  be such that $\left[ \begin{matrix}
   A & {{X}^{*}}  \\
   X & B  \\
\end{matrix} \right]$ is PPT. Then for any $0\le t\le 1$,
\[\left\| \left[ \begin{matrix}
   A{{\sharp}_{t}}B & X  \\
   {{X}^{*}} & A{{\sharp}_{1-t}}B  \\
\end{matrix} \right] \right\|\le \left\| A{{\sharp}_{t}}B \right\|+\left\| A{{\sharp}_{1-t}}B \right\|.\]
In particular
\[\left\| \left[ \begin{matrix}
   A\sharp B & X  \\
   {{X}^{*}} & A\sharp B  \\
\end{matrix} \right] \right\|\le 2\left\| A\sharp B \right\|.\]
\end{corollary}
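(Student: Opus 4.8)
The plan is to read the conclusion off directly from the decomposition supplied by Theorem \ref{12}. Since $\left[ \begin{matrix} A & X^{*} \\ X & B \end{matrix} \right]$ is PPT, for each fixed $t\in[0,1]$ that theorem produces isometries $\widetilde{U},\widetilde{V}\in\mathcal{M}_{2n,n}$ with
\[
\left[ \begin{matrix} A\sharp_t B & X \\ X^{*} & A\sharp_{1-t} B \end{matrix} \right]
= \widetilde{U}\left( A\sharp_t B \right)\widetilde{U}^{*} + \widetilde{V}\left( A\sharp_{1-t} B \right)\widetilde{V}^{*}.
\]
So the first step is simply to invoke Theorem \ref{12}; no new structural input is needed.

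The second step is a two–line norm estimate. Apply the triangle inequality for the operator norm to split the right-hand side into two summands. The key (elementary) observation is that a rectangular isometry $\widetilde{U}\in\mathcal{M}_{2n,n}$ satisfies $\widetilde{U}^{*}\widetilde{U}=I_n$, hence $\|\widetilde{U}\|=\|\widetilde{U}^{*}\|=1$; by submultiplicativity of the operator norm,
\[
\left\| \widetilde{U}\left( A\sharp_t B \right)\widetilde{U}^{*} \right\| \le \|\widetilde{U}\|\,\|A\sharp_t B\|\,\|\widetilde{U}^{*}\| = \|A\sharp_t B\|,
\]
and likewise $\left\| \widetilde{V}\left( A\sharp_{1-t} B \right)\widetilde{V}^{*} \right\| \le \|A\sharp_{1-t} B\|$. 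Adding these yields
\[
\left\| \left[ \begin{matrix} A\sharp_t B & X \\ X^{*} & A\sharp_{1-t} B \end{matrix} \right] \right\| \le \|A\sharp_t B\| + \|A\sharp_{1-t} B\|,
\]
which is exactly the asserted inequality.

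The displayed ``in particular'' case follows by setting $t=\frac12$: then $A\sharp_{1/2}B=A\sharp B$, both diagonal blocks coincide with $A\sharp B$, and the right-hand side collapses to $2\|A\sharp B\|$. I do not expect any real obstacle here, since the entire content is carried by Theorem \ref{12}; the only point that needs a moment's care is that $\widetilde{U},\widetilde{V}$ are \emph{rectangular} isometries, so $\widetilde{U}\widetilde{U}^{*}$ is merely an orthogonal projection rather than the identity — which is precisely why the estimate is phrased through $\|\widetilde{U}\|=1$ and submultiplicativity rather than through a unitary conjugation as in \eqref{8}.
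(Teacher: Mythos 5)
Your proof is correct and follows exactly the route the paper intends: the paper states Corollary \ref{13} as an immediate consequence of the decomposition in Theorem \ref{12}, and your triangle-inequality plus $\|\widetilde{U}\|=\|\widetilde{V}\|=1$ estimate is precisely the (omitted) routine verification, including the $t=\tfrac12$ specialization.
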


\begin{remark}
Ando \cite[Theorem 3.3]{5} proved that if $\left[ \begin{matrix}
   A & {{X}^{*}}  \\
   X & B  \\
\end{matrix} \right]$ is PPT, then
\[\left\| X \right\|\le \left\| A\sharp B \right\|.\]
We know that \cite{8} if $\left[ \begin{matrix}
   A & {{X}^{*}}  \\
   X & B  \\
\end{matrix} \right]\ge O$, then
\[2\left\| X \right\|\le \left\| \left[ \begin{matrix}
   A & {{X}^{*}}  \\
   X & B  \\
\end{matrix} \right] \right\|.\]
Consequently, if $\left[ \begin{matrix}
   A & {{X}^{*}}  \\
   X & B  \\
\end{matrix} \right]$ is PPT, then 
\[\left\| X \right\|\le \frac{1}{2}\left\| \left[ \begin{matrix}
   A\sharp B & X  \\
   {{X}^{*}} & A\sharp B  \\
\end{matrix} \right] \right\|\le \left\| A\sharp B \right\|.\]
\end{remark}

\begin{remark}
It is well-known that
\[{{\left| \left\langle T{{\left| T \right|}^{\alpha +\beta -1}}x,y \right\rangle  \right|}^{2}}\le \left\langle {{\left| T \right|}^{2\alpha }}x,x \right\rangle \left\langle {{\left| {{T}^{*}} \right|}^{2\beta }}y,y \right\rangle ;\left( \alpha ,\beta \in \left[ 0,1 \right],\alpha +\beta \ge 1 \right)\]
for any $T$ \cite{14}. By \cite[Lemma 1]{12}, we infer that 
$$\left[ \begin{matrix}
   {{\left| T \right|}^{2\alpha }} & {{\left| T \right|}^{\alpha +\beta -1}}{{T}^{*}}  \\
   T{{\left| T \right|}^{\alpha +\beta -1}} & {{\left| {{T}^{*}} \right|}^{2\beta }}  \\
\end{matrix} \right]\ge O.$$
So, 
\[\begin{aligned}
   2\left\| T{{\left| T \right|}^{\alpha +\beta -1}} \right\|&\le \left\| \left[ \begin{matrix}
   {{\left| T \right|}^{2\alpha }} & {{\left| T \right|}^{\alpha +\beta -1}}{{T}^{*}}  \\
   T{{\left| T \right|}^{\alpha +\beta -1}} & {{\left| {{T}^{*}} \right|}^{2\beta }}  \\
\end{matrix} \right] \right\| \\ 
 & \le \left\| {{\left| T \right|}^{2\alpha }}+{{U}^{*}}{{\left| {{T}^{*}} \right|}^{2\beta }}U \right\| \\ 
 & =2\left\| {{\left| T \right|}^{2\alpha }}+{{\left| T \right|}^{2\beta }} \right\|  
\end{aligned}\]
i.e.,
\[\left\| T{{\left| T \right|}^{\alpha +\beta -1}} \right\|\le \frac{1}{2}\left\| \left[ \begin{matrix}
   {{\left| T \right|}^{2\alpha }} & {{\left| T \right|}^{\alpha +\beta -1}}{{T}^{*}}  \\
   T{{\left| T \right|}^{\alpha +\beta -1}} & {{\left| {{T}^{*}} \right|}^{2\beta }}  \\
\end{matrix} \right] \right\|\le \left\| {{\left| T \right|}^{2\alpha }}+{{\left| T \right|}^{2\beta }} \right\|.\]
In particular,
\[\left\| T \right\|=\frac{1}{2}\left\| \left[ \begin{matrix}
   \left| {{T}^{*}} \right| & T  \\
   {{T}^{*}} & \left| T \right|  \\
\end{matrix} \right] \right\|.\]
\end{remark}

\begin{theorem}
Let $A,B,X\in {{\mathcal M}_{n}}$ be such that $\left[ \begin{matrix}
   A & {{X}^{*}}  \\
   X & B  \\
\end{matrix} \right]\ge O$. Then
\[\left\| \left[ \begin{matrix}
   A & {{X}^{*}}  \\
   X & B  \\
\end{matrix} \right] \right\|\le \left\| A+B+\left| X \right|+\left| {{X}^{*}} \right| \right\|.\]
\end{theorem}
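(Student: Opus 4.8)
The plan is to dominate the given positive semidefinite block matrix, in the Löwner order, by a block-\emph{diagonal} matrix with diagonal entries $A+\left| X \right|$ and $B+\left| {{X}^{*}} \right|$, and then estimate the norm of that block-diagonal matrix. The first step is the auxiliary positivity
\[\left[ \begin{matrix} \left| X \right| & -{{X}^{*}} \\ -X & \left| {{X}^{*}} \right| \end{matrix} \right]\ge O.\]
To see this, write the polar decomposition $X=U\left| X \right|$ with $U\in{{\mathcal M}_{n}}$ unitary, so that ${{X}^{*}}=\left| X \right|{{U}^{*}}$ and $\left| {{X}^{*}} \right|=U\left| X \right|{{U}^{*}}$ (the latter exactly as used in the proof of Theorem \ref{5}); then the matrix above equals
\[\left[ \begin{matrix} {{\left| X \right|}^{1/2}} \\ -U{{\left| X \right|}^{1/2}} \end{matrix} \right]\left[ \begin{matrix} {{\left| X \right|}^{1/2}} & -{{\left| X \right|}^{1/2}}{{U}^{*}} \end{matrix} \right],\]
which is of the form $M{{M}^{*}}$, hence positive semidefinite. (This is, up to the sign of the off-diagonal corner, the positivity underlying the Remark that precedes the statement.)

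Adding this auxiliary matrix to the hypothesis $\left[ \begin{matrix} A & {{X}^{*}} \\ X & B \end{matrix} \right]\ge O$ cancels the off-diagonal blocks, so
\[\left[ \begin{matrix} A & {{X}^{*}} \\ X & B \end{matrix} \right]\le \left[ \begin{matrix} A+\left| X \right| & O \\ O & B+\left| {{X}^{*}} \right| \end{matrix} \right].\]
Since the operator norm is monotone on positive semidefinite matrices and the norm of a positive semidefinite block-diagonal matrix is the larger of the norms of its two blocks, this yields
\[\left\| \left[ \begin{matrix} A & {{X}^{*}} \\ X & B \end{matrix} \right] \right\|\le \max \left\{ \left\| A+\left| X \right| \right\|,\ \left\| B+\left| {{X}^{*}} \right| \right\| \right\}.\]
Finally, $A+\left| X \right|$ and $B+\left| {{X}^{*}} \right|$ are both positive semidefinite, so each is dominated in the Löwner order by their sum $A+B+\left| X \right|+\left| {{X}^{*}} \right|$; one more application of monotonicity of the norm bounds each of $\left\| A+\left| X \right| \right\|$ and $\left\| B+\left| {{X}^{*}} \right| \right\|$ by $\left\| A+B+\left| X \right|+\left| {{X}^{*}} \right| \right\|$, which gives the stated inequality (in fact with the sharper maximum on the right-hand side).

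The only step requiring genuine thought is the auxiliary positivity in the first paragraph; everything afterward is routine use of the monotonicity $O\le S\le T\Rightarrow \left\| S \right\|\le \left\| T \right\|$ and of the block-diagonal norm formula. The one technical point to keep in mind is that the polar decomposition must be taken with a genuine unitary $U\in{{\mathcal M}_{n}}$ (always possible in finite dimensions), so that the identity $\left| {{X}^{*}} \right|=U\left| X \right|{{U}^{*}}$ holds exactly as it is used. (Alternatively, one can avoid block manipulations entirely: using that the norm of the positive semidefinite block matrix equals $\max_{{{\left\| u \right\|}^{2}}+{{\left\| v \right\|}^{2}}=1}\left( \left\langle Au,u \right\rangle +2\operatorname{Re}\left\langle Xu,v \right\rangle +\left\langle Bv,v \right\rangle \right)$ together with the Cauchy--Schwarz estimate $2\left| \left\langle Xu,v \right\rangle \right|\le \left\langle \left| X \right|u,u \right\rangle +\left\langle \left| {{X}^{*}} \right|v,v \right\rangle$, one reaches the same conclusion.)
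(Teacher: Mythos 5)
Your argument is correct and is essentially the paper's proof: your auxiliary positive matrix is exactly $|H|-H$ for the Hermitian off-diagonal part $H=\left[\begin{smallmatrix} O & X^{*} \\ X & O \end{smallmatrix}\right]$ (the paper phrases this as $H\le |H|$), so both proofs dominate the given matrix in the L\"owner order by $\left[\begin{smallmatrix} A+|X| & O \\ O & B+|X^{*}| \end{smallmatrix}\right]$ and then bound the norm of this block-diagonal matrix. The only difference is in the final step: the paper invokes Hiroshima's inequality \eqref{9}, whereas you use the max-of-block-norms formula together with norm monotonicity on positive semidefinite matrices, which is more elementary and even yields the slightly sharper bound $\max\left\{ \left\| A+|X| \right\|,\ \left\| B+|X^{*}| \right\| \right\}$.
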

\begin{proof}
Indeed,
\[\begin{aligned}
   \left\| \left[ \begin{matrix}
   A & {{X}^{*}}  \\
   X & B  \\
\end{matrix} \right] \right\|&=\left\| \left[ \begin{matrix}
   A & O  \\
   O & B  \\
\end{matrix} \right]+\left[ \begin{matrix}
   O & {{X}^{*}}  \\
   X & O  \\
\end{matrix} \right] \right\| \\ 
 & \le \left\| \left[ \begin{matrix}
   A & O  \\
   O & B  \\
\end{matrix} \right]+\left[ \begin{matrix}
   \left| X \right| & O  \\
   O & \left| {{X}^{*}} \right|  \\
\end{matrix} \right] \right\| \\ 
 & =\left\| \left[ \begin{matrix}
   A+\left| X \right| & O  \\
   O & B+\left| {{X}^{*}} \right|  \\
\end{matrix} \right] \right\| \\ 
 & \le \left\| A+B+\left| X \right|+\left| {{X}^{*}} \right| \right\|  
\end{aligned}\]
where the first inequality follows from the fact that $\left[ \begin{matrix}
   O & {{X}^{*}}  \\
   X & O  \\
\end{matrix} \right]$ is Hermitian, and for any Hermitiam matrix $T$ , we have   $T\le \left| T \right|$. The second inequality is also obtained from \eqref{9}. This completes the proof.
\end{proof}

\section{Applications towards hyponormal and $(\alpha,\beta)$-normal matrices}
This section presents several results on hyponormal, semi-hyponormal, and $(\alpha,\beta)$-normal matrices. While some of these results can be considered as applications of the results of the previous section, other results are related but independent.

\begin{theorem}\label{1}
Let $T\in\mathcal{M}_n$. Then $\left[ \begin{matrix}
   \left| T \right| & {{T}^{*}}  \\
   T & \left| T \right|  \\
\end{matrix} \right]\ge O$ if and only if $T$ is semi-hyponormal.
\end{theorem}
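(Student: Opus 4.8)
The plan is to reduce the positivity of $\left[ \begin{matrix} |T| & T^* \\ T & |T| \end{matrix} \right]$ to the operator inequality $|T^*|\le|T|$ by two successive congruences, neither of which inverts $|T|$ (important, since $T$ may be singular). First I would write the polar decomposition $T=U|T|$ with $U\in\mathcal M_n$ unitary; in finite dimensions the partial isometry in the polar decomposition always extends to a unitary. One then records the identities $T^*U=|T|U^*U=|T|$, $U^*T=|T|$, and $|T^*|=(TT^*)^{1/2}=(U|T|^2U^*)^{1/2}=U|T|U^*$.

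Conjugation by the unitary $\left[ \begin{matrix} I & O \\ O & U \end{matrix} \right]$ preserves positive semidefiniteness, and using $T^*U=|T|=U^*T$,
\[
\left[ \begin{matrix} I & O \\ O & U^* \end{matrix} \right]\left[ \begin{matrix} |T| & T^* \\ T & |T| \end{matrix} \right]\left[ \begin{matrix} I & O \\ O & U \end{matrix} \right]=\left[ \begin{matrix} |T| & |T| \\ |T| & U^*|T|U \end{matrix} \right].
\]
Next, the invertible congruence by $\left[ \begin{matrix} I & -I \\ O & I \end{matrix} \right]$ (which also preserves $\ge O$) block-diagonalizes the right-hand side:
\[
\left[ \begin{matrix} I & O \\ -I & I \end{matrix} \right]\left[ \begin{matrix} |T| & |T| \\ |T| & U^*|T|U \end{matrix} \right]\left[ \begin{matrix} I & -I \\ O & I \end{matrix} \right]=\left[ \begin{matrix} |T| & O \\ O & U^*|T|U-|T| \end{matrix} \right].
\]
Since $|T|\ge O$ always holds, the original block matrix is positive semidefinite if and only if $U^*|T|U-|T|\ge O$; conjugating this last inequality by $U$ converts it into $|T^*|=U|T|U^*\le|T|$, which is exactly the definition of $T$ being semi-hyponormal. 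This establishes both implications simultaneously.

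I do not anticipate a genuine difficulty here; the only point requiring care is the avoidance of $|T|^{-1}$, which is why I favor the elementary block-diagonalizing congruence over a Schur-complement computation or an appeal to the maximality characterization \eqref{4}, both of which want invertible diagonal blocks. An equally self-contained alternative argues directly with quadratic forms: for $x,y\in\mathbb C^n$ the value of the form of $\left[ \begin{matrix} |T| & T^* \\ T & |T| \end{matrix} \right]$ at $(x,y)$ is $\langle|T|x,x\rangle+\langle|T|y,y\rangle+2\operatorname{Re}\langle Tx,y\rangle$, and the Cauchy--Schwarz estimate $|\langle Tx,y\rangle|=|\langle|T|^{1/2}x,|T|^{1/2}U^*y\rangle|\le\langle|T|x,x\rangle^{1/2}\langle|T^*|y,y\rangle^{1/2}$ together with the arithmetic--geometric mean inequality gives nonnegativity of the form when $|T^*|\le|T|$, while for the converse, fixing $y$ and optimizing over $x$ forces $\langle|T^*|y,y\rangle\le\langle|T|y,y\rangle$.
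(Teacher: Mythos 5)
Your proof is correct, and it differs from the paper's argument in a way worth noting. Both you and the paper start identically: write $T=U|T|$ with $U$ unitary and conjugate by $\mathrm{diag}(I,U)$ to reduce to the positivity of $\left[\begin{smallmatrix} |T| & |T| \\ |T| & U^*|T|U \end{smallmatrix}\right]$. The divergence is in how that positivity is converted into $|T|\le U^*|T|U$: the paper invokes the maximality characterization \eqref{4} of the geometric mean to get $|T|\le |T|\sharp\, U^*|T|U$ and then computes with $|T|^{-\frac12}$, a step that implicitly assumes $|T|$ is invertible (and \eqref{4} is stated for strictly positive blocks), so strictly speaking it needs a perturbation or limiting argument when $T$ is singular; your congruence by $\left[\begin{smallmatrix} I & -I \\ O & I \end{smallmatrix}\right]$, which block-diagonalizes to $\mathrm{diag}\bigl(|T|,\,U^*|T|U-|T|\bigr)$, avoids inverses entirely, works verbatim for singular $T$, and, being an invertible congruence, yields both implications in one equivalence chain. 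In addition, the paper disposes of the ``semi-hyponormal $\Rightarrow$ positive'' direction by citing \cite{1}, whereas your argument (and your quadratic-form alternative via the Cauchy--Schwarz bound $|\langle Tx,y\rangle|\le\langle |T|x,x\rangle^{1/2}\langle |T^*|y,y\rangle^{1/2}$) is self-contained. So what your route buys is elementarity and full generality without the geometric-mean machinery; what the paper's route buys is continuity with the techniques (Lemma \ref{2}, \eqref{4}) used throughout its Section 2.
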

\begin{proof}
It is easy to show that \cite{1} if $T$ is  semi-hyponormal, then 
	\[\left[ \begin{matrix}
   \left| T \right| & {{T}^{*}}  \\
   T & \left| T \right|  \\
\end{matrix} \right]\ge O.\]
We show that if $\left[ \begin{matrix}
   \left| T \right| & {{T}^{*}}  \\
   T & \left| T \right|  \\
\end{matrix} \right]\ge O$, then $T$ is semi-hyponormal. Indeed, if $T=U\left| T \right|$ is the polar decomposition of $T$, then
	$$\left[ \begin{matrix}
   \left| T \right| & {{T}^{*}}U  \\
   {{U}^{*}}T & {{U}^{*}}\left| T \right|U  \\
\end{matrix} \right]=\left[ \begin{matrix}
   I & O  \\
   O & {{U}^{*}}  \\
\end{matrix} \right]\left[ \begin{matrix}
   \left| T \right| & {{T}^{*}}  \\
   T & \left| T \right|  \\
\end{matrix} \right]\left[ \begin{matrix}
   I & O  \\
   O & U  \\
\end{matrix} \right]\ge O.$$
Since ${{T}^{*}}U={{U}^{*}}T=\left| T \right|$, we get
$$\left[ \begin{matrix}
   \left| T \right| & \left| T \right|  \\
   \left| T \right| & {{U}^{*}}\left| T \right|U  \\
\end{matrix} \right]\ge O.$$
From \eqref{4}, we get 
	$$\left| T \right|\le \left| T \right|\sharp{{U}^{*}}\left| T \right|U.$$
By the definition of geometric mean, we have
	$$\left| T \right|\le {{\left| T \right|}^{\frac{1}{2}}}{{\left( {{\left| T \right|}^{-\frac{1}{2}}}{{U}^{*}}\left| T \right|U{{\left| T \right|}^{-\frac{1}{2}}} \right)}^{\frac{1}{2}}}{{\left| T \right|}^{\frac{1}{2}}}.$$
Multiplying both sides by ${{\left| T \right|}^{-\frac{1}{2}}}$, we infer that
	$$I\le {{\left( {{\left| T \right|}^{-\frac{1}{2}}}{{U}^{*}}\left| T \right|U{{\left| T \right|}^{-\frac{1}{2}}} \right)}^{\frac{1}{2}}}.$$
This implies
$$\left| T \right|\le {{U}^{*}}\left| T \right|U.$$
Thus,
	$$\left| {{T}^{*}} \right|=U\left| T \right|{{U}^{*}}\le \left| T \right|,$$
which shows that $T$ is semi-hyponormal. This completes the proof.
\end{proof}

In \cite[(2.11)]{2}, it has been shown that if $A,B$ are normal,  then 
\begin{equation}
|A+B|\leq \frac{|A|+|B|+U^*(|A|+|B|)U}{2},
\end{equation}
where $U$ is the unitary matrix in the polar decomposition of $A+B.$ Since every normal matrix is necessarily semi-hyponormal, and because of \eqref{eq_amgm}, the following result significantly improves \cite[(2.11)]{2}.
\begin{corollary}
Let $A, B\in \mathcal{M}_n$ be semi-hyponormal and let $U$ be the unitary part in the polar decomposition $A+B=U\left| A+B \right|$. Then 
	\[\left| A+B \right|\le \left( \left| A \right|+\left| B \right| \right)\sharp\left({{U}^{*}}\left( \left| A \right|+\left| B \right| \right)U\right).\]
\end{corollary}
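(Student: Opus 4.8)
The plan is to reduce the corollary to Corollary~\ref{6} by exhibiting a suitable PPT block matrix built from $A$ and $B$. First I would recall that, by Theorem~\ref{1}, a semi-hyponormal matrix $T$ satisfies $\left[\begin{smallmatrix}|T|&T^*\\T&|T|\end{smallmatrix}\right]\ge O$, and in fact $|T^*|\le|T|$ means that the corresponding block built with $|T^*|$ on one diagonal slot is also positive. The key observation is that for $A+B$ we have the elementary decomposition
\[
\left[\begin{matrix} |A|+|B| & (A+B)^* \\ A+B & |A|+|B| \end{matrix}\right]
=\left[\begin{matrix} |A| & A^* \\ A & |A| \end{matrix}\right]
+\left[\begin{matrix} |B| & B^* \\ B & |B| \end{matrix}\right],
\]
and since $A$ and $B$ are semi-hyponormal, each summand on the right is positive semidefinite by Theorem~\ref{1}; hence the left-hand block is $\ge O$. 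Applying the same identity with $A,B$ replaced by $A^*,B^*$ (note $(A^*+B^*)^*=A+B$) and using that $A^*,B^*$ need not be semi-hyponormal is the wrong move, so instead I would argue PPT directly: the transpose-type block $\left[\begin{smallmatrix} |A|+|B| & A+B \\ (A+B)^* & |A|+|B| \end{smallmatrix}\right]$ equals the sum of $\left[\begin{smallmatrix} |A| & A \\ A^* & |A| \end{smallmatrix}\right]$ and $\left[\begin{smallmatrix} |B| & B \\ B^* & |B| \end{smallmatrix}\right]$, and each of these is congruent (via $\mathrm{diag}(I,U_A^*)$ with $A=U_A|A|$) to $\left[\begin{smallmatrix}|A|&|A|\\|A|&U_A^*|A|U_A\end{smallmatrix}\right]\ge O$ exactly as in the proof of Theorem~\ref{1}.

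So the main step is: show that $\left[\begin{smallmatrix} |A|+|B| & (A+B)^* \\ A+B & |A|+|B| \end{smallmatrix}\right]$ is PPT whenever $A,B$ are semi-hyponormal. Once that is established, apply Corollary~\ref{6} with $t=\tfrac12$, the choice $A_{\mathrm{new}}=B_{\mathrm{new}}=|A|+|B|$, and $X=A+B$ with polar decomposition $A+B=U|A+B|$, so that $|X|=|A+B|$. Corollary~\ref{6} then yields
\[
|A+B|\le \bigl((|A|+|B|)\sharp(|A|+|B|)\bigr)\sharp\bigl(U^*((|A|+|B|)\sharp(|A|+|B|))U\bigr),
\]
and since $T\sharp T=T$ for $T>O$ (and one passes to the positive definite case by a standard $\varepsilon$-perturbation $|A|+|B|+\varepsilon I$ and continuity of $\sharp$), the right-hand side collapses to $(|A|+|B|)\sharp(U^*(|A|+|B|)U)$, which is the claimed bound.

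The hard part will be verifying the PPT condition cleanly, i.e.\ that \emph{both} $\left[\begin{smallmatrix} |A|+|B| & A+B \\ (A+B)^* & |A|+|B| \end{smallmatrix}\right]$ and its conjugate are positive semidefinite. Positivity of the first follows from Theorem~\ref{1} applied separately to $A$ and to $B$ and additivity of the positive cone, as sketched above. For the second block, $\left[\begin{smallmatrix} |A|+|B| & (A+B)^* \\ A+B & |A|+|B| \end{smallmatrix}\right]$, one notes it is the sum of $\left[\begin{smallmatrix} |A| & A^* \\ A & |A| \end{smallmatrix}\right]$ and $\left[\begin{smallmatrix} |B| & B^* \\ B & |B| \end{smallmatrix}\right]$, each again $\ge O$ by Theorem~\ref{1}; the only subtlety is that $|(A^*)|=|A^*|$, not $|A|$, so one should be careful to use the block with $|A|$ in \emph{both} diagonal entries, which is precisely the one characterized in Theorem~\ref{1}. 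I expect the congruence argument of Theorem~\ref{1} to transfer verbatim, so no genuinely new obstacle arises — the proof is essentially an assembly of Theorem~\ref{1}, additivity of positivity, and Corollary~\ref{6}.
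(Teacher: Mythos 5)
Your argument is correct and delivers the stated bound, and its first half is exactly the paper's: apply Theorem \ref{1} to $A$ and to $B$ separately and add, obtaining $\left[\begin{smallmatrix}|A|+|B| & A^*+B^*\\ A+B & |A|+|B|\end{smallmatrix}\right]\ge O$. Where you diverge is in what you do next. The paper simply feeds this single positive block into Theorem \ref{5}: since both diagonal blocks equal $|A|+|B|$, the ``in particular'' case $|X|\le A\sharp\,U^*BU$ with $X=A+B=U|A+B|$ gives the corollary in one line, with no PPT hypothesis needed. You instead verify that the block is PPT and invoke Corollary \ref{6} with $t=\tfrac12$, then collapse $T\sharp T=T$. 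That route is sound, but it is a detour: Corollary \ref{6} is itself deduced from Theorem \ref{5} (via Lemma \ref{2}), so you pass through strictly more machinery to land at the same inequality, and the PPT verification buys nothing here. Moreover, the step you flag as ``the hard part'' is actually automatic: because the two diagonal blocks coincide, conjugation by the swap unitary $\left[\begin{smallmatrix}O & I\\ I & O\end{smallmatrix}\right]$ carries $\left[\begin{smallmatrix}|A|+|B| & (A+B)^*\\ A+B & |A|+|B|\end{smallmatrix}\right]$ onto its partial transpose, so positivity already implies PPT without any blockwise argument.

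One small inaccuracy in your sketch: conjugating $\left[\begin{smallmatrix}|A| & A\\ A^* & |A|\end{smallmatrix}\right]$ by $\mathrm{diag}(I,U_A^*)$ produces $\left[\begin{smallmatrix}|A| & |A^*|\\ |A^*| & |A^*|\end{smallmatrix}\right]$ (using $|A^*|=U_A|A|U_A^*$), not $\left[\begin{smallmatrix}|A| & |A|\\ |A| & U_A^*|A|U_A\end{smallmatrix}\right]$; the latter comes from the block with $A^*$ in the top-right corner, as in the proof of Theorem \ref{1}. The positivity you need still holds --- either by the swap argument above or by a Schur-complement computation using $|A^*|\le|A|$ --- so this is a cosmetic slip rather than a gap, but as written the congruence does not ``transfer verbatim.''
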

\begin{proof}
Using Theorem \ref{1}, we see that
\begin{equation}\label{3}
\left[ \begin{matrix}
   \left| A \right|+\left| B \right| & {{A}^{*}}+{{B}^{*}}  \\
   A+B & \left| A \right|+\left| B \right|  \\
\end{matrix} \right]\ge O.
\end{equation}
By \eqref{3} and Theorem \ref{5}, we get the desired result.
\end{proof}

\begin{corollary}\label{14}
Let $A,B\in \mathcal{M}_n$ be semi-hyponormal. Then
\[\left\| A+B \right\|\le \left\| \left| A \right|+\left| B \right| \right\|.\]
\end{corollary}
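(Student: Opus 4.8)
The plan is to deduce this norm inequality directly from the positivity established in Theorem~\ref{1} (in its additive form \eqref{3}), combined with the Hiroshima-type bound \eqref{9}. First I would recall that \eqref{3} tells us the block matrix
\[\left[ \begin{matrix}
   \left| A \right|+\left| B \right| & {{A}^{*}}+{{B}^{*}}  \\
   A+B & \left| A \right|+\left| B \right|  \\
\end{matrix} \right]\ge O,\]
so this is a positive semidefinite $2n\times 2n$ matrix with diagonal blocks both equal to $|A|+|B|$ and off-diagonal block $A+B$ (which need not be Hermitian). The obstruction to applying \eqref{9} immediately is precisely that $A+B$ is not Hermitian, so I cannot invoke Hiroshima's inequality as stated.

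Second, I would resolve this obstruction by passing to the polar decomposition $A+B=U|A+B|$ and conjugating by $\mathrm{diag}(I,U^*)$, exactly as in the proof of Theorem~\ref{10}. This turns the off-diagonal block into $|A+B|$, which is Hermitian (indeed positive), while turning the lower-right diagonal block into $U^*(|A|+|B|)U$; the upper-left block $|A|+|B|$ is unchanged, and positivity is preserved. Thus we obtain
\[\left[ \begin{matrix}
   \left| A \right|+\left| B \right| & \left| A+B \right|  \\
   \left| A+B \right| & {{U}^{*}}(\left| A \right|+\left| B \right|)U  \\
\end{matrix} \right]\ge O.\]

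Third, now \eqref{9} applies with $X=|A+B|$, $A\leftarrow |A|+|B|$, $B\leftarrow U^*(|A|+|B|)U$, giving
\[\left\| \left[ \begin{matrix}
   \left| A \right|+\left| B \right| & \left| A+B \right|  \\
   \left| A+B \right| & {{U}^{*}}(\left| A \right|+\left| B \right|)U  \\
\end{matrix} \right] \right\|\le \left\| (\left| A \right|+\left| B \right|)+{{U}^{*}}(\left| A \right|+\left| B \right|)U \right\|.\]
The left-hand norm is at least $\||A+B|\| = \|A+B\|$, for instance because \eqref{8} (or the remark that $2\|X\|\le\|\text{block}\|$) bounds the off-diagonal block by the norm of the whole matrix — actually more simply, the block matrix dominates $\mathrm{diag}(0,0)$ in a way giving $\||A+B|\|\le\|\text{block}\|$ via compression to suitable coordinates; I would just cite $2\|X\|\le\|[\begin{smallmatrix}A&X^*\\X&B\end{smallmatrix}]\|$ from the remark, noting $\||A+B|\|\le 2\||A+B|\|$ is not what we want, so instead use that compressing the block to the off-diagonal corner shows $\|A+B\|\le\|\text{block}\|$. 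For the right-hand side, $\|U^*(|A|+|B|)U\|=\||A|+|B|\|$ since $U$ is unitary, and by the triangle inequality $\|(|A|+|B|)+U^*(|A|+|B|)U\|\le 2\||A|+|B|\|$. This chain yields $\|A+B\|\le 2\||A|+|B|\|$, which is weaker by a factor of $2$ than claimed, so I would instead extract the sharp constant by using \eqref{9} together with the observation that the norm of the conjugated block equals the norm of the original block $\left[\begin{smallmatrix}|A|+|B|&A^*+B^*\\A+B&|A|+|B|\end{smallmatrix}\right]$, whose norm is at least $\|\,|A|+|B|\,\| + \|A+B\| - \||A|+|B|\|$... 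The cleanest route, and the one I expect the authors take, is: the conjugated block has norm equal to the original, which is $\ge 2\|A+B\|$ is false; rather one uses that a positive block matrix with equal diagonal blocks $D$ satisfies $\|\text{block}\| = \|D+X\|$ when $X$ is Hermitian (a refinement of \eqref{9} in the equal-diagonal case), giving $\||A|+|B| + |A+B|\| \le \|(|A|+|B|) + U^*(|A|+|B|)U\| \le 2\||A|+|B||\|$, hence $\|A+B\| \le \||A|+|B|\| + 0$ after subtracting — the main obstacle is pinning down exactly which refinement of \eqref{9} delivers the constant $1$ rather than $2$, and I would handle it by invoking the previous corollary that $|A+B|\le(|A|+|B|)\sharp(U^*(|A|+|B|)U)\le\frac{(|A|+|B|)+U^*(|A|+|B|)U}{2}$, so that $\left[\begin{smallmatrix}|A|+|B|&|A+B|\\|A+B|&|A|+|B|\end{smallmatrix}\right]$ is dominated (after a further unitary conjugation absorbing $U$) by a block matrix whose norm is $\||A|+|B|\|$ by \eqref{9} applied to the Hermitian off-diagonal case with matching averaged blocks; taking the $(2,1)$-entry norm then gives the result.
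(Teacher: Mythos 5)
Your proposal assembles the right ingredients but never closes the argument with the constant $1$; as written it only proves $\|A+B\|\le 2\,\||A|+|B|\|$ and then drifts. The factor of $2$ is lost at the point where you bound the norm of the conjugated block from below by $\|A+B\|$ instead of by $2\|A+B\|$: the inequality $2\|X\|\le \bigl\|\bigl[\begin{smallmatrix}A&X^*\\X&B\end{smallmatrix}\bigr]\bigr\|$ quoted in the paper's remark (from \cite{8}) is exactly what you need, applied with $X=|A+B|$, and you cite it only to dismiss it. With it the chain is immediate: from \eqref{3} and Theorem \ref{10}, $2\|A+B\|=2\||A+B|\|\le \bigl\|\bigl[\begin{smallmatrix}|A|+|B|&A^*+B^*\\A+B&|A|+|B|\end{smallmatrix}\bigr]\bigr\|\le \|(|A|+|B|)+U^*(|A|+|B|)U\|\le 2\||A|+|B|\|$. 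Alternatively, and this is the route the paper intends (it states the corollary right after the geometric-mean bound): the preceding corollary plus \eqref{eq_amgm} give the operator inequality $O\le |A+B|\le \tfrac{1}{2}\bigl((|A|+|B|)+U^*(|A|+|B|)U\bigr)$, and then monotonicity of the operator norm on positive matrices, the triangle inequality, and unitary invariance give $\|A+B\|=\||A+B|\|\le\tfrac12\bigl(\||A|+|B|\|+\|U^*(|A|+|B|)U\|\bigr)=\||A|+|B|\|$. No further block-matrix manipulation is needed; in particular your final step (``taking the $(2,1)$-entry norm'' of a domination between block matrices) is not a valid operation, since the semidefinite order between $2\times 2$ block matrices does not transfer to a norm comparison of off-diagonal blocks in the direction you want.

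There is also a concrete false claim in your detour: for a positive semidefinite block $\bigl[\begin{smallmatrix}D&X\\X&D\end{smallmatrix}\bigr]$ with $X$ Hermitian, the norm is $\max\{\|D+X\|,\|D-X\|\}$ (conjugate by $\tfrac{1}{\sqrt2}\bigl[\begin{smallmatrix}I&I\\I&-I\end{smallmatrix}\bigr]$), not $\|D+X\|$; for example $D=I$, $X=-\tfrac12 I$ gives a positive block of norm $\tfrac32$ while $\|D+X\|=\tfrac12$. So the ``refinement of \eqref{9} in the equal-diagonal case'' you hoped to lean on does not exist, and the argument must be completed by one of the two routes above.
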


\begin{remark}
We highlight that Corollary \ref{14} is well-known for normal operators $A,B$ \cite[(1.42)]{4}.
\end{remark}

\begin{theorem}\label{25}
Let $T\in \mathcal{M}_n$. Then the following assertions are equivalent.
\begin{itemize}
\item[(i)] $T$ is $(\alpha,\beta)$-normal.
\medskip
\item[(ii)] $\left[ \begin{matrix}
   \frac{1}{\alpha }{{\left| {{T}^{*}} \right|}^{2}} & {{\left| T \right|}^{2}}  \\
   {{\left| T \right|}^{2}} & \frac{1}{\alpha }{{\left| T \right|}^{2}}  \\
\end{matrix} \right]\ge O$ and $\left[ \begin{matrix}
   \beta {{\left| T \right|}^{2}} & {{\left| {{T}^{*}} \right|}^{2}}  \\
   {{\left| {{T}^{*}} \right|}^{2}} & \beta {{\left| {{T}^{*}} \right|}^{2}}  \\
\end{matrix} \right]\ge O$.
\medskip
\item[(iii)] $\left[ \begin{matrix}
   \frac{1}{{{\alpha }^{2}}}{{\left| {{T}^{*}} \right|}^{2}} & {{\left| T \right|}^{2}}  \\
   {{\left| T \right|}^{2}} & \frac{1}{{{\alpha }^{2}}}{{\left| {{T}^{*}} \right|}^{2}}  \\
\end{matrix} \right]\ge O$ and $\left[ \begin{matrix}
   {{\beta }^{2}}{{\left| T \right|}^{2}} & {{\left| {{T}^{*}} \right|}^{2}}  \\
   {{\left| {{T}^{*}} \right|}^{2}} & {{\beta }^{2}}{{\left| T \right|}^{2}}  \\
\end{matrix} \right]\ge O$.

\end{itemize}
\end{theorem}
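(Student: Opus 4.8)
The plan is to prove the equivalences by reducing everything to the scalar-type inequalities defining $(\alpha,\beta)$-normality, namely $\alpha^2|T|^2\le |T^*|^2\le \beta^2|T|^2$, and to the elementary observation that a block matrix $\left[\begin{matrix} C & Y \\ Y^* & D\end{matrix}\right]$ with $C,D\ge O$ and $Y$ positive semidefinite is itself positive semidefinite precisely when $Y\le C\sharp D$ (this is exactly \eqref{4}). Since in all six block matrices appearing in (ii) and (iii) the off-diagonal entry is $|T|^2$ or $|T^*|^2$, hence positive semidefinite, each positivity condition is equivalent to one clean operator inequality of the form $Z\le W$.

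First I would treat (iii), since it is closest to the definition. Applying \eqref{4} to $\left[\begin{matrix} \frac{1}{\alpha^2}|T^*|^2 & |T|^2 \\ |T|^2 & \frac{1}{\alpha^2}|T^*|^2\end{matrix}\right]$, positivity is equivalent to $|T|^2\le \left(\frac{1}{\alpha^2}|T^*|^2\right)\sharp\left(\frac{1}{\alpha^2}|T^*|^2\right)=\frac{1}{\alpha^2}|T^*|^2$, i.e. $\alpha^2|T|^2\le |T^*|^2$; here I use $C\sharp C=C$ for $C\ge O$ together with the fact that scaling passes through $\sharp$. Similarly, positivity of the second block in (iii) is equivalent to $|T^*|^2\le \beta^2|T|^2$. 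Hence (iii) is literally the conjunction of the two defining inequalities, so (i)$\Leftrightarrow$(iii).

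Next I would do the same for (ii). Positivity of $\left[\begin{matrix} \frac{1}{\alpha}|T^*|^2 & |T|^2 \\ |T|^2 & \frac{1}{\alpha}|T|^2\end{matrix}\right]$ is equivalent, via \eqref{4}, to $|T|^2\le \left(\frac{1}{\alpha}|T^*|^2\right)\sharp\left(\frac{1}{\alpha}|T|^2\right)=\frac{1}{\alpha}\left(|T^*|^2\sharp |T|^2\right)$, that is $\alpha|T|^2\le |T^*|^2\sharp|T|^2$. Likewise the second block in (ii) is equivalent to $|T^*|^2\le \beta\left(|T|^2\sharp|T^*|^2\right)$. So the task reduces to showing that the pair $\{\alpha|T|^2\le |T^*|^2\sharp|T|^2,\ |T^*|^2\le \beta(|T|^2\sharp|T^*|^2)\}$ is equivalent to $\{\alpha^2|T|^2\le|T^*|^2,\ |T^*|^2\le\beta^2|T|^2\}$. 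I anticipate this is the main obstacle. For the forward direction (i)$\Rightarrow$(ii): from $\alpha^2|T|^2\le|T^*|^2$ I would apply monotonicity of $\sharp$ in each variable to get $\alpha|T|^2=(\alpha^2|T|^2)\sharp|T|^2\le |T^*|^2\sharp|T|^2$, and symmetrically $|T^*|^2=|T^*|^2\sharp(\beta^{-2}\cdot\beta^2|T|^2)^{\,}$—more carefully, from $|T^*|^2\le\beta^2|T|^2$ one gets $|T^*|^2\sharp|T^*|^2\le(\beta^2|T|^2)\sharp|T^*|^2=\beta\,(|T|^2\sharp|T^*|^2)$, i.e. $|T^*|^2\le\beta(|T|^2\sharp|T^*|^2)$. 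For the reverse (ii)$\Rightarrow$(i): from $\alpha|T|^2\le|T^*|^2\sharp|T|^2$ and the arithmetic–geometric inequality \eqref{eq_amgm} I would \emph{not} directly recover $\alpha^2|T|^2\le|T^*|^2$; instead I would combine the two inequalities of (ii). From $|T^*|^2\le\beta(|T|^2\sharp|T^*|^2)$ and $\alpha|T|^2\le|T|^2\sharp|T^*|^2$ one must extract the homogeneous bounds; the cleanest route is to note $|T|^2\sharp|T^*|^2\le\tfrac12(|T|^2+|T^*|^2)$ is the wrong direction, so instead I would use that $A\sharp B\le B$ fails in general but $A\sharp B$ sits between, and rather feed each inequality of (ii) back through \eqref{4}/Lemma~\ref{2}-type arguments; concretely, squaring the relation $\alpha I\le |T|^{-1}|T^*|^2|T|^{-1}$'s geometric-mean form (assuming invertibility, then a continuity/limiting argument for the general case) yields $\alpha^2 I\le |T|^{-1}|T^*|^2|T|^{-1}$, hence $\alpha^2|T|^2\le|T^*|^2$; symmetrically for $\beta$. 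The delicate points are justifying the reduction to the invertible case by replacing $T$ with $T$ restricted to the range, or by a $T+\varepsilon I$ perturbation, and checking that $\left(\frac1\alpha C\right)\sharp\left(\frac1\alpha D\right)=\frac1\alpha(C\sharp D)$, which is immediate from the defining formula for $\sharp_t$. Assembling (i)$\Leftrightarrow$(ii) and (i)$\Leftrightarrow$(iii) then gives the full cycle of equivalences.
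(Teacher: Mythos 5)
Your reduction rests on the claim that, for $C,D\ge O$ and $Y\ge O$, the block matrix $\left[\begin{smallmatrix} C & Y\\ Y & D\end{smallmatrix}\right]$ is positive semidefinite \emph{precisely when} $Y\le C\sharp D$, and that this is ``exactly \eqref{4}''. That is not what \eqref{4} says, and the ``if'' direction is false in general: \eqref{4} only asserts that $C\sharp D$ is the \emph{maximal} Hermitian $Z$ for which the block is positive, so positivity implies $Y\le C\sharp D$, not conversely. For a counterexample take $C=I$ and $D=G^{2}$ with $G\ge O$; then $C\sharp D=G$ and, by the Schur complement criterion, the block is positive iff $Y^{2}\le G^{2}$, while $0\le Y\le G$ does not imply $Y^{2}\le G^{2}$ (the usual failure of operator monotonicity of squaring). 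Consequently the implications that go from an operator inequality back to block positivity --- i.e.\ (i)$\Rightarrow$(ii) and (i)$\Rightarrow$(iii) --- are not established by your argument. Only (ii)$\Rightarrow$(i) and (iii)$\Rightarrow$(i) are sound as written, since there you use the legitimate ``only if'' part of \eqref{4} and then conjugate by $|T|^{-\frac12}$ (resp.\ $|T^{*}|^{-\frac12}$) and square, modulo the invertibility/limiting issue you flag.

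The gap is avoidable, and avoiding it is exactly what the paper does for (i)$\Leftrightarrow$(ii): instead of the geometric mean, use the Schur complement criterion \cite[Theorem 1.3.3]{11}, which is a genuine two-way equivalence: for $A>O$, $\left[\begin{smallmatrix} A & X\\ X^{*} & B\end{smallmatrix}\right]\ge O$ iff $X^{*}A^{-1}X\le B$. Applied to the first matrix in (ii) this gives positivity iff $|T|^{2}\bigl(\tfrac1\alpha|T|^{2}\bigr)^{-1}|T|^{2}=\alpha|T|^{2}\le\tfrac1\alpha|T^{*}|^{2}$, i.e.\ $\alpha^{2}|T|^{2}\le|T^{*}|^{2}$, and similarly for the $\beta$ matrix; the matrices in (iii) can be handled the same way (or, since their diagonal blocks are equal, via $\left[\begin{smallmatrix} C& Y\\ Y& C\end{smallmatrix}\right]\ge O\Leftrightarrow -C\le Y\le C$), whereas the paper simply cites \cite[Theorem 2.2]{1} for (i)$\Leftrightarrow$(iii). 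Your particular instances do happen to be ones where the converse you invoked is true (equal diagonal blocks, or off-diagonal block a scalar multiple of a diagonal block), but verifying that requires precisely such a Schur-complement computation, which your write-up never supplies; as it stands, (i)$\Rightarrow$(ii) and (i)$\Rightarrow$(iii) are unproved, and the geometric-mean detour in (ii) is unnecessary once the correct criterion is used.
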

\begin{proof}
$\left( i \right)\Leftrightarrow \left( ii \right)$ Since $T$ is $(\alpha,\beta)$-normal, we have
\[\begin{aligned}
   &{{\left| T \right|}^{2}}\le \frac{1}{{{\alpha }^{2}}}{{\left| {{T}^{*}} \right|}^{2}}\\ 
 & \text{ }\Leftrightarrow {{\left| T \right|}^{2}}{{\left( \frac{1}{\alpha }{{\left| T \right|}^{2}} \right)}^{-1}}{{\left| T \right|}^{2}}\le \frac{1}{\alpha }{{\left| {{T}^{*}} \right|}^{2}} \\ 
 & \Leftrightarrow \left[ \begin{matrix}
   \frac{1}{\alpha }{{\left| {{T}^{*}} \right|}^{2}} & {{\left| T \right|}^{2}}  \\
   {{\left| T \right|}^{2}} & \frac{1}{\alpha }{{\left| T \right|}^{2}}  \\
\end{matrix} \right]\ge O  \quad \text{(by \cite[Theorem 1.3.3]{11})}.
\end{aligned}\]
Again, since $T$ is $(\alpha,\beta)$-normal, we have
\[\begin{aligned}
  & {{\left| {{T}^{*}} \right|}^{2}}\le {{\beta }^{2}}{{\left| T \right|}^{2}}\\ 
 & \text{ }\Leftrightarrow {{\left| {{T}^{*}} \right|}^{2}}{{\left( \beta {{\left| {{T}^{*}} \right|}^{2}} \right)}^{-1}}{{\left| {{T}^{*}} \right|}^{2}}\le \beta {{\left| T \right|}^{2}} \\ 
 & \Leftrightarrow \left[ \begin{matrix}
   \beta {{\left| T \right|}^{2}} & {{\left| {{T}^{*}} \right|}^{2}}  \\
   {{\left| {{T}^{*}} \right|}^{2}} & \beta {{\left| {{T}^{*}} \right|}^{2}}  \\
\end{matrix} \right]\ge O  \quad \text{(by \cite[Theorem 1.3.3]{11})}.
\end{aligned}\]
$\left( i \right)\Leftrightarrow \left( iii \right)$ See \cite[Theorem 2.2]{1}.
\end{proof}

We can establish the following theorem by employing the same arguments as in the proof of $\left( i \right)\Leftrightarrow \left( ii \right)$ in Theorem \ref{25}, but we present another proof.
\begin{theorem}\label{18}
Let $T\in \mathcal{M}_n$ be  $\left( \alpha ,\beta  \right)$-normal. Then 
\[\left[ \begin{matrix}
   \frac{1}{\sqrt{\alpha }}\left| {{T}^{*}} \right| & \left| T \right|  \\
   \left| T \right| & \frac{1}{\sqrt{\alpha }}\left| T \right|  \\
\end{matrix} \right]\ge O\text{ and }\left[ \begin{matrix}
   \sqrt{\beta }\left| T \right| & \left| {{T}^{*}} \right|  \\
   \left| {{T}^{*}} \right| & \sqrt{\beta }\left| {{T}^{*}} \right|  \\
\end{matrix} \right]\ge O.\]
\end{theorem}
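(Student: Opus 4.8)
The plan is to first convert the $(\alpha,\beta)$-normality of $T$ into the two scalar-weighted operator inequalities
\[
\alpha\left| T \right|\le \left| {{T}^{*}} \right|\le \beta\left| T \right|.
\]
These follow from the defining relations $\alpha^2|T|^2\le |T^*|^2\le \beta^2|T|^2$ by applying the operator monotone function $s\mapsto s^{1/2}$, and using $(|A|^2)^{1/2}=|A|$ for $A\in\{T,T^*\}$ together with $(\alpha^2|T|^2)^{1/2}=\alpha|T|$ and $(\beta^2|T|^2)^{1/2}=\beta|T|$.

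Next I would record the following elementary observation: for positive semidefinite $P,Q\in\mathcal{M}_n$ and a scalar $s>0$,
\[
Q\le s^2P\quad\Longrightarrow\quad \left[ \begin{matrix} sP & Q \\ Q & sQ \end{matrix} \right]\ge O.
\]
To see this, for $x,y\in\mathbb{C}^n$ write $\langle Qx,y\rangle=\langle Q^{1/2}x,Q^{1/2}y\rangle$, apply the Cauchy--Schwarz inequality, and then use $2ab\le s^{-1}a^2+sb^2$; this yields $2\,\mathrm{Re}\langle Qx,y\rangle\ge -s^{-1}\langle Qx,x\rangle-s\langle Qy,y\rangle$, so the quadratic form of the block matrix evaluated at $(x,y)$ is at least $\langle(sP-s^{-1}Q)x,x\rangle\ge O$. (Alternatively, one may invoke a Schur complement criterion such as \cite[Theorem 1.3.3]{11}, exactly as in the proof of Theorem \ref{25}, after the usual invertibility reduction.)

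Finally I would apply the displayed implication twice. Taking $s=1/\sqrt{\alpha}$, $P=\left|T^*\right|$, and $Q=\left|T\right|$, the inequality $\left|T\right|\le \tfrac{1}{\alpha}\left|T^*\right|=s^2P$, which is just $\alpha\left|T\right|\le\left|T^*\right|$, gives the first asserted block positivity. Taking $s=\sqrt{\beta}$, $P=\left|T\right|$, and $Q=\left|T^*\right|$, the inequality $\left|T^*\right|\le\beta\left|T\right|=s^2P$ gives the second. No genuine obstacle is expected here; the only points demanding care are the operator-monotonicity step (legitimately passing from the squared inequalities to $\alpha|T|\le|T^*|\le\beta|T|$) and, if one uses the Schur complement route instead of the quadratic-form route, dealing with the possible non-invertibility of $|T|$ and $|T^*|$ by a perturbation argument — which the quadratic-form argument avoids altogether.
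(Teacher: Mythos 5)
Your proof is correct, but it follows a different route from the one the paper presents. You first pass from $\alpha^2|T|^2\le |T^*|^2\le \beta^2|T|^2$ to $\alpha|T|\le |T^*|\le \beta|T|$ by operator monotonicity of the square root, and then invoke an elementary lemma: if $Q\le s^2P$ with $P,Q\ge O$ and $s>0$, then $\left[\begin{smallmatrix} sP & Q\\ Q & sQ\end{smallmatrix}\right]\ge O$, proved directly on the quadratic form via Cauchy--Schwarz and $2ab\le s^{-1}a^2+sb^2$ (your estimate $2\,\mathrm{Re}\langle Qx,y\rangle\ge -s^{-1}\langle Qx,x\rangle-s\langle Qy,y\rangle$ does give the lower bound $\langle (sP-s^{-1}Q)x,x\rangle\ge 0$, so the lemma is sound, and it indeed sidesteps the invertibility issue that the Schur-complement route would raise). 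The paper instead starts from the mixed Schwarz inequality ${\left|\langle Tx,y\rangle\right|}^{2}\le \langle |T|x,x\rangle\langle |T^*|y,y\rangle$, combines it with the same inequalities $|T|\le\frac{1}{\alpha}|T^*|$ and $|T^*|\le\beta|T|$, and uses Kittaneh's lemma to obtain the block matrices in \eqref{11}, whose off-diagonal entries are $T$ and $T^*$; it then conjugates by $\mathrm{diag}(I,U)$-type unitaries coming from the polar decomposition $T=U|T|$, using $U^*T=T^*U=|T|$ and $U|T|U^*=|T^*|$, to arrive at the stated matrices. The paper explicitly acknowledges that an argument in your spirit works (``by employing the same arguments as in the proof of $(i)\Leftrightarrow(ii)$ in Theorem \ref{25}''), but deliberately presents the longer proof because the intermediate matrices \eqref{11} are PPT and are reused later (e.g., for Corollary \ref{27} and the positive linear map result). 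So your approach buys brevity and elementarity, at the cost of not producing the auxiliary positivity \eqref{11} that the paper exploits afterwards.
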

\begin{proof}
According to the assumption,
	\[\left\langle \left| T \right|x,x \right\rangle \left\langle \left| {{T}^{*}} \right|y,y \right\rangle \le \frac{1}{\alpha }\left\langle \left| {{T}^{*}} \right|x,x \right\rangle \left\langle \left| {{T}^{*}} \right|y,y \right\rangle \]
and
	\[\left\langle \left| T \right|x,x \right\rangle \left\langle \left| {{T}^{*}} \right|y,y \right\rangle \le \beta \left\langle \left| T \right|x,x \right\rangle \left\langle \left| T \right|y,y \right\rangle \]
for any $x,y\in \mathcal H$. On the other hand, we know that (see, e.g., \cite[p. 216]{9})
	\[{{\left| \left\langle Tx,y \right\rangle  \right|}^{2}}\le \left\langle \left| T \right|x,x \right\rangle \left\langle \left| {{T}^{*}} \right|y,y \right\rangle .\]
Consequently,
	\[{{\left| \left\langle Tx,y \right\rangle  \right|}^{2}}\le \frac{1}{\alpha }\left\langle \left| {{T}^{*}} \right|x,x \right\rangle \left\langle \left| {{T}^{*}} \right|y,y \right\rangle \]
and
	\[{{\left| \left\langle Tx,y \right\rangle  \right|}^{2}}\le \beta \left\langle \left| T \right|x,x \right\rangle \left\langle \left| T \right|y,y \right\rangle .\]
The last two inequalities are equivalent to
\begin{equation}\label{11}
\left[ \begin{matrix}
   \frac{1}{\sqrt{\alpha }}\left| {{T}^{*}} \right| & {{T}^{*}}  \\
   T & \frac{1}{\sqrt{\alpha }}\left| {{T}^{*}} \right|  \\
\end{matrix} \right]\ge O\text{ and }\left[ \begin{matrix}
   \sqrt{\beta }\left| T \right| & {{T}^{*}}  \\
   T & \sqrt{\beta }\left| T \right|  \\
\end{matrix} \right]\ge O,
\end{equation}
thanks to \cite[Lemma 1]{12}.

Now assume that $T=U\left| T \right|$ is the polar decomposition of $T$. Then 
\begin{equation}\label{19}
\left[ \begin{matrix}
   I & O  \\
   O & {{U}^{*}}  \\
\end{matrix} \right]\left[ \begin{matrix}
   \frac{1}{\sqrt{\alpha }}\left| {{T}^{*}} \right| & {{T}^{*}}  \\
   T & \frac{1}{\sqrt{\alpha }}\left| {{T}^{*}} \right|  \\
\end{matrix} \right]\left[ \begin{matrix}
   I & O  \\
   O & U  \\
\end{matrix} \right]=\left[ \begin{matrix}
   \frac{1}{\sqrt{\alpha }}\left| {{T}^{*}} \right| & {{T}^{*}}U  \\
   {{U}^{*}}T & \frac{1}{\sqrt{\alpha }}{{U}^{*}}\left| {{T}^{*}} \right|U  \\
\end{matrix} \right]\ge O.
\end{equation}
On the other hand,
\[\left[ \begin{matrix}
   \sqrt{\beta }\left| T \right| & {{T}^{*}}  \\
   T & \sqrt{\beta }\left| T \right|  \\
\end{matrix} \right]\ge O\text{ }\Leftrightarrow \text{ }\left[ \begin{matrix}
   \sqrt{\beta }\left| T \right| & T  \\
   {{T}^{*}} & \sqrt{\beta }\left| T \right|  \\
\end{matrix} \right]\ge O.\]
So,
\begin{equation}\label{20}
\left[ \begin{matrix}
   I & O  \\
   O & U  \\
\end{matrix} \right]\left[ \begin{matrix}
   \sqrt{\beta }\left| T \right| & T  \\
   {{T}^{*}} & \sqrt{\beta }\left| T \right|  \\
\end{matrix} \right]\left[ \begin{matrix}
   I & O  \\
   O & {{U}^{*}}  \\
\end{matrix} \right]=\left[ \begin{matrix}
   \sqrt{\beta }\left| T \right| & T{{U}^{*}}  \\
   U{{T}^{*}} & \sqrt{\beta }U\left| T \right|{{U}^{*}}  \\
\end{matrix} \right]\ge O.
\end{equation}
One can easily check that ${{U}^{*}}T={{T}^{*}}U=\left| T \right|$. Meanwhile, $\left| {{T}^{*}} \right|=U\left| T \right|{{U}^{*}}$ (see \cite[p. 58]{9}), so ${{U}^{*}}\left| {{T}^{*}} \right|U=\left| T \right|$. Hence, by \eqref{19} and \eqref{20}, we obtain
\[\left[ \begin{matrix}
   \frac{1}{\sqrt{\alpha }}\left| {{T}^{*}} \right| & \left| T \right|  \\
   \left| T \right| & \frac{1}{\sqrt{\alpha }}\left| T \right|  \\
\end{matrix} \right]\ge O\text{ and }\left[ \begin{matrix}
   \sqrt{\beta }\left| T \right| & \left| {{T}^{*}} \right|  \\
   \left| {{T}^{*}} \right| & \sqrt{\beta }\left| {{T}^{*}} \right|  \\
\end{matrix} \right]\ge O,\]
as desired.
\end{proof}

The matrices in \eqref{11} are P.P.T. Therefore, by Corollary \ref{26}, we have the following eigenvalue inequalities.
\begin{corollary}\label{27}
Let $T\in \mathcal{M}_n$ be  $\left( \alpha ,\beta  \right)$-normal. Then 
	\[{{\lambda }_{j}}\left( 2\sqrt{\alpha }\left| T \right|-\left| {{T}^{*}} \right| \right)\le {{\lambda }_{j}}\left( \left| {{T}^{*}} \right| \right)\]
and
	\[{{\lambda }_{j}}\left( \frac{2}{\sqrt{\beta }}\left| {{T}^{*}} \right|-\left| T \right| \right)\le {{\lambda }_{j}}\left( \left| T \right| \right)\]
for $j=1,2,\ldots ,n$.
\end{corollary}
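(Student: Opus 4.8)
The plan is to run the two block matrices appearing in \eqref{11} through Corollary \ref{26} with the single weight $t=\tfrac12$, and then homogenize by a positive scalar. The key simplification is that Corollary \ref{26} needs only a PPT hypothesis, and that when the two diagonal blocks coincide, say $A=B=M$, the weighted geometric means $A\sharp_t B$ and $A\sharp_{1-t}B$ both collapse to $M$, since $M\sharp_s M=M$ for every $s$. Thus, taking $t=\tfrac12$ with equal diagonal blocks reduces the two conclusions of Corollary \ref{26} to $\lambda_j(2|X|-M)\le\lambda_j(M)$ and $\lambda_j(2|X^*|-M)\le\lambda_j(M)$, which is exactly the shape of the desired estimates.

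First I would verify that the two matrices in \eqref{11} are genuinely PPT and not merely positive semidefinite (their positivity being the content of \eqref{11} itself). Applying a partial transpose swaps the off-diagonal entries $T$ and $T^*$, so, using the characterization \cite[Lemma 1]{12} exactly as in the proof of Theorem \ref{18}, positivity of $\left[\begin{matrix}\frac{1}{\sqrt\alpha}|T^*| & T\\ T^* & \frac{1}{\sqrt\alpha}|T^*|\end{matrix}\right]$ reduces to the bound $|\langle T^*x,y\rangle|^2\le\frac{1}{\alpha}\langle|T^*|x,x\rangle\langle|T^*|y,y\rangle$. This I would get from the mixed Schwarz inequality in the form $|\langle T^*x,y\rangle|^2=|\langle Ty,x\rangle|^2\le\langle|T|y,y\rangle\langle|T^*|x,x\rangle$ together with $|T|\le\frac{1}{\alpha}|T^*|$, the latter following from $\alpha^2|T|^2\le|T^*|^2$ by operator monotonicity of the square root. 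The partial transpose of the second matrix in \eqref{11} is handled identically, now using $|T^*|\le\beta|T|$.

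With PPT established, apply the first inequality of Corollary \ref{26} to $\left[\begin{matrix}\frac{1}{\sqrt\alpha}|T^*| & T^*\\ T & \frac{1}{\sqrt\alpha}|T^*|\end{matrix}\right]$, so that $A=B=\frac{1}{\sqrt\alpha}|T^*|$ and $X=T$, with $t=\tfrac12$; this gives $\lambda_j\bigl(2|T|-\tfrac{1}{\sqrt\alpha}|T^*|\bigr)\le\lambda_j\bigl(\tfrac{1}{\sqrt\alpha}|T^*|\bigr)$, and multiplying through by $\sqrt\alpha>0$, using $\lambda_j(cS)=c\,\lambda_j(S)$ for $c>0$, yields the first claimed inequality. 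Symmetrically, applying the second inequality of Corollary \ref{26} to $\left[\begin{matrix}\sqrt\beta|T| & T^*\\ T & \sqrt\beta|T|\end{matrix}\right]$ (so $A=B=\sqrt\beta|T|$, $X=T$) gives $\lambda_j(2|T^*|-\sqrt\beta|T|)\le\lambda_j(\sqrt\beta|T|)$, and dividing by $\sqrt\beta>0$ finishes. I do not expect a real obstacle here; the only points needing care are matching the off-diagonal blocks of \eqref{11} to the correct roles of $X$ versus $X^*$ in Corollary \ref{26} (picking the other of its two inequalities only returns a trivially true estimate), and, if one allows $\alpha=0$, disposing of that degenerate case by hand, where the first inequality becomes the trivial $\lambda_j(-|T^*|)\le\lambda_j(|T^*|)$.
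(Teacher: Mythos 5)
Your proposal is correct and follows essentially the same route as the paper, whose proof simply observes that the matrices in \eqref{11} are PPT and invokes Corollary \ref{26} with equal diagonal blocks (so $A\sharp_t B$ collapses) and then rescales by $\sqrt{\alpha}$, resp.\ $\sqrt{\beta}$. Your extra verification of the PPT property is welcome detail the paper omits; note it also follows at once from the fact that when the two diagonal blocks coincide, conjugation by the flip $\left[\begin{smallmatrix} O & I\\ I & O\end{smallmatrix}\right]$ carries each matrix in \eqref{11} onto its partial transpose, so positivity alone already gives PPT.
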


\begin{remark}
It is well-known that for any $T\in \mathcal{M}_n$,
\[\left\| \;\left| T \right|-\left| {{T}^{*}} \right| \;\right\|\le \left\| T \right\|.\]
From Corollary \ref{27}, we infer that if $T$ is  $\left( \alpha ,\beta  \right)$-normal, then
	\[\left\| \;2\sqrt{\alpha }\left| T \right|-\left| {{T}^{*}} \right| \;\right\|\le \left\| T \right\|,\]
and
	\[\left\| \;\frac{2}{\sqrt{\beta }}\left| {{T}^{*}} \right|-\left| T \right| \;\right\|\le \left\| T \right\|.\]
\end{remark}

The inequality \eqref{eq_amgm} is usually referred to as the operator arithmetic-geometric mean inequality. It is of great interest in the literature to find possible reverses for this inequality. Usually, such reverses are found under additional conditions, as seen in \cite{fms, gms}. In the following, we present a reverse of \eqref{eq_amgm} for $(\alpha,\beta)$-normal matrices.
\begin{proposition}
Let $T\in \mathcal{M}_n$ be  $\left( \alpha ,\beta  \right)$-normal. Then 
\[\frac{\left| T \right|+\left| {{T}^{*}} \right|}{2}\le \min \left\{ \frac{1}{\sqrt{\alpha }},\sqrt{\beta } \right\}\left( \left| T \right|\sharp\left| {{T}^{*}} \right| \right).\]
\end{proposition}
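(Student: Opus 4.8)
The plan is to dominate $|T|$ and $|T^{*}|$ separately by scalar multiples of $G:=|T|\sharp|T^{*}|$ and then average. Since $T$ is $(\alpha,\beta)$-normal, Theorem \ref{18} provides
$$\left[\begin{matrix}\frac{1}{\sqrt{\alpha}}|T^{*}|&|T|\\ |T|&\frac{1}{\sqrt{\alpha}}|T|\end{matrix}\right]\ge O\qquad\text{and}\qquad\left[\begin{matrix}\sqrt{\beta}\,|T|&|T^{*}|\\ |T^{*}|&\sqrt{\beta}\,|T^{*}|\end{matrix}\right]\ge O.$$
Applying the maximality characterization \eqref{4} to the Hermitian positive off-diagonal blocks, and using the symmetry $X\sharp Y=Y\sharp X$ together with the homogeneity $(cX)\sharp(cY)=c\,(X\sharp Y)$ for $c>0$, the first matrix gives $|T|\le\frac{1}{\sqrt{\alpha}}\bigl(|T^{*}|\sharp|T|\bigr)=\frac{1}{\sqrt{\alpha}}\,G$ and the second gives $|T^{*}|\le\sqrt{\beta}\,\bigl(|T|\sharp|T^{*}|\bigr)=\sqrt{\beta}\,G$. (If $T$ is singular one first replaces $|T|,|T^{*}|$ by $|T|+\varepsilon I$ and $|T^{*}|+\varepsilon I$, runs the argument, and lets $\varepsilon\downarrow 0$ at the end.)

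These two estimates already yield the crude bound $\frac{|T|+|T^{*}|}{2}\le\frac12\bigl(\frac{1}{\sqrt{\alpha}}+\sqrt{\beta}\bigr)G$; to sharpen the constant to $\min\{1/\sqrt{\alpha},\sqrt{\beta}\}$ I would pass to a scalar inequality. After the perturbation above one may conjugate by $|T|^{-1/2}$: putting $C:=|T|^{-1/2}|T^{*}||T|^{-1/2}$, the defining relations $\alpha^{2}|T|^{2}\le|T^{*}|^{2}\le\beta^{2}|T|^{2}$ give, by operator monotonicity of the square root, $\alpha|T|\le|T^{*}|\le\beta|T|$, i.e.\ $\alpha I\le C\le\beta I$, while $|T|^{-1/2}G|T|^{-1/2}=C^{1/2}$ and $|T|^{-1/2}\bigl(\tfrac{|T|+|T^{*}|}{2}\bigr)|T|^{-1/2}=\tfrac{I+C}{2}$. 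Since $\tfrac{I+C}{2}$, $C^{1/2}$ and $C$ are commuting functions of $C$, the asserted $\tfrac{|T|+|T^{*}|}{2}\le cG$ is equivalent to $\tfrac{1+x}{2}\le c\sqrt{x}$ holding throughout the spectrum of $C$; and since $x\mapsto\tfrac{1+x}{2\sqrt{x}}$ decreases on $(0,1]$ and increases on $[1,\infty)$, with $1\in[\alpha,\beta]$, it suffices to check $x=\alpha$ and $x=\beta$.

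The heart of the matter — and the step I expect to be the main obstacle — is exactly this endpoint verification with $c=\min\{1/\sqrt{\alpha},\sqrt{\beta}\}$: one must secure $\tfrac{1+\alpha}{2}\le c\sqrt{\alpha}$ and $\tfrac{1+\beta}{2}\le c\sqrt{\beta}$ simultaneously, which forces a careful joint use of \emph{both} defining inequalities $\alpha|T|\le|T^{*}|$ and $|T^{*}|\le\beta|T|$ rather than one at a time (the latter giving only the weaker $\tfrac12(1/\sqrt{\alpha}+\sqrt{\beta})$), together with a split according to the sign of $\alpha\beta-1$, i.e.\ according to which of $1/\sqrt{\alpha},\sqrt{\beta}$ is the smaller. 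Once those two elementary inequalities are settled, the spectral reduction of the previous paragraph immediately closes the argument.
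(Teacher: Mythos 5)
Your reduction is correct as far as it goes: with $C=|T|^{-1/2}|T^{*}||T|^{-1/2}$ one indeed has $\alpha I\le C\le\beta I$, $|T|^{-1/2}\bigl(|T|\sharp|T^{*}|\bigr)|T|^{-1/2}=C^{1/2}$, and the claimed inequality is equivalent to $\frac{1+x}{2\sqrt{x}}\le\min\{\frac{1}{\sqrt{\alpha}},\sqrt{\beta}\}$ for every $x$ in the spectrum of $C$. You have also put your finger on exactly the decisive point, the endpoint verification; but that step cannot be ``settled'' by any case analysis, because the required scalar inequalities are false (for $\alpha\beta\le 1$ one would need $1+\alpha\le 2\sqrt{\alpha\beta}$, which fails for small $\alpha$ and $\beta$ near $1$), and in fact the proposition itself is false. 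Take $P=\mathrm{diag}\bigl(1,\tfrac18,\tfrac14,\tfrac12\bigr)$, let $U$ be the cyclic permutation matrix with $UPU^{*}=\mathrm{diag}\bigl(\tfrac18,\tfrac14,\tfrac12,1\bigr)$, and set $T=UP$, so that $|T|=P$ and $|T^{*}|=UPU^{*}$. The ratios of corresponding diagonal entries are $\tfrac18,2,2,2$, so $T$ is $(\alpha,\beta)$-normal with $\alpha=\tfrac18$, $\beta=2$, and $\min\{\frac{1}{\sqrt{\alpha}},\sqrt{\beta}\}=\sqrt{2}$. All matrices involved are diagonal; in the $(1,1)$ entry the left-hand side of the proposition equals $\frac{1+1/8}{2}=\frac{9}{16}$, while the right-hand side equals $\sqrt{2}\cdot\sqrt{1/8}=\frac12$, so the asserted inequality fails. (In your language: $\mathrm{spec}(C)=\{\tfrac18,2\}$ and the endpoint condition fails at $x=\alpha$. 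The constraint that $|T|$ and $|T^{*}|$ share a spectrum does save the claim for $2\times2$ matrices, which is why it looks plausible, but in higher dimensions one eigenvalue ratio can sit at $\alpha$ while several others compensate at $\beta$.)

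For comparison, the paper's own argument obtains, correctly, the inequalities \eqref{21}, namely $|T|\le\frac{1}{\sqrt{\alpha}}(|T|\sharp|T^{*}|)$ and $|T^{*}|\le\sqrt{\beta}(|T|\sharp|T^{*}|)$, from Theorem \ref{18} and \eqref{4} -- the same as your first step, and these only give the constant $\frac12\bigl(\frac{1}{\sqrt{\alpha}}+\sqrt{\beta}\bigr)$. The passage to the minimum rests on the ``crossed'' inequalities \eqref{22}, $|T^{*}|\le\frac{1}{\sqrt{\alpha}}(|T|\sharp|T^{*}|)$ and $|T|\le\sqrt{\beta}(|T|\sharp|T^{*}|)$, justified there by conjugating \eqref{21} with the unitary $U$ of the polar decomposition as in the proof of \eqref{15}. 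That step is flawed: conjugation by $U$ sends $|T|$ to $|T^{*}|$ but sends $|T^{*}|$ to $U|T^{*}|U^{*}$, which is not $|T|$ (one has $U^{*}|T^{*}|U=|T|$, not the reverse), so \eqref{22} is unsupported -- and the example above violates its second inequality in the $(1,1)$ entry. So the gap you flagged is not a technicality to be finessed by splitting on the sign of $\alpha\beta-1$: with the constant $\min\{\frac{1}{\sqrt{\alpha}},\sqrt{\beta}\}$ the statement is simply wrong, and what your (and the paper's) first step actually proves is the weaker bound $\frac{|T|+|T^{*}|}{2}\le\frac12\bigl(\frac{1}{\sqrt{\alpha}}+\sqrt{\beta}\bigr)\bigl(|T|\sharp|T^{*}|\bigr)$.
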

\begin{proof}
Theorem \ref{18} implies
\begin{equation}\label{21}
\left| T \right|\le \frac{1}{\sqrt{\alpha }}\left( \left| T \right|\sharp\left| {{T}^{*}} \right| \right)\text{ and }\left| {{T}^{*}} \right|\le \sqrt{\beta }\left( \left| T \right|\sharp\left| {{T}^{*}} \right| \right)
\end{equation}
due to \eqref{4}. Further, 
\begin{equation}\label{22}
\left| {{T}^{*}} \right|\le \frac{1}{\sqrt{\alpha }}\left( \left| T \right|\sharp\left| {{T}^{*}} \right| \right)\text{ and }\left| T \right|\le \sqrt{\beta }\left( \left| T \right|\sharp\left| {{T}^{*}} \right| \right)
\end{equation}
by utilizing the same approach as in the proof of inequality \eqref{15}.
Inequalities \eqref{21} and \eqref{22} say that
\[\left| T \right|\le \min \left\{ \frac{1}{\sqrt{\alpha }},\sqrt{\beta } \right\}\left( \left| T \right|\sharp \left| {{T}^{*}} \right| \right)\text{ and }\left| {{T}^{*}} \right|\le \min \left\{ \frac{1}{\sqrt{\alpha }},\sqrt{\beta } \right\}\left( \left| T \right|\sharp \left| {{T}^{*}} \right| \right).\] 
Adding the above two inequalities together implies the desired result.
\end{proof}

\begin{remark}
Inequalities \eqref{21} and \eqref{22} can be shown in another way. 
Since $f\left( t \right)=\sqrt{t}$ is operator monotone on $\left( 0,\infty  \right)$, and since $\alpha^2|T|^2\leq |T^*|^2,$ we infer that
\[\left| T \right|\le \frac{1}{\alpha }\left| {{T}^{*}} \right|.\]
This implies 
	\[\left| T \right|\le \frac{1}{\sqrt{\alpha }}\left( \left| T \right|\sharp\left| {{T}^{*}} \right| \right),\] where we have used the fact that if $A,B,C,D>O$ are such that $A\leq B$ and $C\leq D$, then $A\sharp C\leq B\sharp D.$

\end{remark}

For the following result, we remind the reader of positive linear maps. A linear map $\Phi:\mathcal{M})_n\to\mathcal{M}_n$ is said to be positive if $\Phi(A)\geq O$ whenever $A\geq O.$

\begin{theorem}
Let $T\in \mathcal{M}_n$ be  $\left( \alpha ,\beta  \right)$-normal  and let $\Phi $ be a positive linear map. If $\Phi \left( T \right)=U\left| \Phi \left( T \right) \right|$ is the polar decomposition of $\Phi \left( T \right)$, then 
\[\left| \Phi \left( T \right) \right|\le \frac{1}{\sqrt{\alpha }}\left( \Phi \left( \left| {{T}^{*}} \right| \right)\sharp{{U}^{*}}\Phi \left( \left| {{T}^{*}} \right| \right)U \right)\]	
and
\[\left| \Phi \left( T \right) \right|\le \sqrt{\beta }\left( \Phi \left( \left| T \right| \right)\sharp{{U}^{*}}\Phi \left( \left| T \right| \right)U \right).\]
\end{theorem}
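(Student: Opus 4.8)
The plan is to obtain both inequalities as direct applications of Theorem~\ref{5} to two positive block matrices extracted from \eqref{11}. Recall that, $T$ being $(\alpha,\beta)$-normal, the proof of Theorem~\ref{18} furnished
\[
\left[ \begin{matrix} \tfrac{1}{\sqrt{\alpha}}\left|T^{*}\right| & T^{*} \\ T & \tfrac{1}{\sqrt{\alpha}}\left|T^{*}\right| \end{matrix} \right]\ge O
\qquad\text{and}\qquad
\left[ \begin{matrix} \sqrt{\beta}\left|T\right| & T^{*} \\ T & \sqrt{\beta}\left|T\right| \end{matrix} \right]\ge O .
\]
In each matrix the two diagonal blocks coincide, so interchanging them --- a unitary congruence by the $2\times 2$ block permutation --- preserves positivity and rewrites both in the form $\left[\begin{matrix} C & T \\ T^{*} & C\end{matrix}\right]\ge O$.

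First I would apply the positive linear map $\Phi$ entrywise to these block matrices. Positive linear maps are adjoint-preserving, so $\Phi(T^{*})=\Phi(T)^{*}$, and, writing $A$ for $\tfrac{1}{\sqrt{\alpha}}\Phi(\left|T^{*}\right|)$ in the first case and for $\sqrt{\beta}\,\Phi(\left|T\right|)$ in the second, the image in each case has the form
\[
\left[ \begin{matrix} A & \Phi(T) \\ \Phi(T)^{*} & A \end{matrix} \right]\ge O ,
\]
which is precisely the hypothesis of Theorem~\ref{5} with off-diagonal block $X=\Phi(T)$ and polar decomposition $\Phi(T)=U\left|\Phi(T)\right|$ given by assumption. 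Theorem~\ref{5} with $t=\tfrac12$ then yields $\left|\Phi(T)\right|\le A\sharp U^{*}AU$; since $U^{*}AU$ equals $\tfrac{1}{\sqrt{\alpha}}U^{*}\Phi(\left|T^{*}\right|)U$ (respectively $\sqrt{\beta}\,U^{*}\Phi(\left|T\right|)U$), the homogeneity $(cB)\sharp(cC)=c\,(B\sharp C)$ lets me factor the scalar out and recover exactly the two asserted inequalities.

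The step that deserves care --- and that I expect to be the genuine obstacle --- is the passage from a positive block matrix to its entrywise image under $\Phi$: for a merely positive linear map this need not remain positive (the transpose map on $\mathcal{M}_2$ is the standard counterexample), so the argument in fact uses that $\Phi$ is $2$-positive, and is in any case valid for completely positive maps. Everything else is routine: the polar-decomposition bookkeeping is absorbed into Theorem~\ref{5} --- it is the same congruence by $\mathrm{diag}(I,U)$ already used for \eqref{16} and in Theorem~\ref{1} --- and the homogeneity of the geometric mean is immediate from its definition.
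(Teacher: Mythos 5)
Your route is essentially the paper's own proof: the paper also applies $\Phi$ block-entrywise to the two matrices in \eqref{11}, uses that positive maps are adjoint-preserving, and then concludes ``by mimicking the technique of the proof of Theorem \ref{5}'' --- i.e.\ exactly the congruence by $\operatorname{diag}(I,U)$ and the maximal characterization \eqref{4} that your direct appeal to Theorem \ref{5} (with $t=\tfrac12$, equal diagonal blocks, and homogeneity of $\sharp$) packages; apart from your harmless swap of the off-diagonal blocks, the two arguments coincide. On the one delicate step you flag: you have correctly located the crux, since the paper performs the same blockwise application for a merely positive $\Phi$ and justifies it only by the citation \cite[Exercise 3.2.2]{4}; under your strengthened reading ($2$-positive, in particular completely positive, $\Phi$) the argument is complete and correct. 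Note, however, that your transpose counterexample does not actually bite for the matrices used here: both diagonal blocks are equal, and for $\left[ \begin{matrix} C & T^* \\ T & C \end{matrix} \right]\ge O$ the block-entrywise transpose is the swap-conjugate (by $\left[ \begin{matrix} O & I \\ I & O \end{matrix} \right]$) of the full transpose, hence again positive; consequently every decomposable positive map (completely positive plus co-completely-positive) passes this step, and such equal-diagonal-block matrices are automatically PPT (as the paper itself observes after Theorem \ref{18}), so only a non-decomposable positive map could conceivably violate it. Thus ``the argument in fact uses $2$-positivity'' overstates what is known for this special form: the step is indeed unjustified for general positive $\Phi$ both in your write-up and, arguably, in the paper's one-line citation, but it is not refuted by the transpose map, and whether mere positivity suffices here is left open by both proofs.
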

\begin{proof}
First notice that every positive linear map is adjoint-preserving; i.e., ${{\Phi }^{*}}\left( T \right)=\Phi \left( {{T}^{*}} \right)$ for all $T$ \cite[Lemma 2.3.1]{4}.
It follows from \eqref{11} that
\[\left[ \begin{matrix}
   \frac{1}{\sqrt{\alpha }}\Phi \left( \left| {{T}^{*}} \right| \right) & {{\Phi }^{*}}\left( T \right)  \\
   \Phi \left( T \right) & \frac{1}{\sqrt{\alpha }}\Phi \left( \left| {{T}^{*}} \right| \right)  \\
\end{matrix} \right]\ge O\text{ and }\left[ \begin{matrix}
   \sqrt{\beta }\Phi \left( \left| T \right| \right) & {{\Phi }^{*}}\left( T \right)  \\
   \Phi \left( T \right) & \sqrt{\beta }\Phi \left( \left| T \right| \right)  \\
\end{matrix} \right]\ge O\]
thanks to \cite[Exercise 3.2.2]{4}. We get the desired result by mimicking the technique of the proof of Theorem \ref{5}.
\end{proof}

The following result presents an interesting reverse of the well-known inequality $\|T^2\|\leq \|T\|^2,$ for any $T$. We recall that a contraction $K$ satisfies $KK^*\leq I$, the identity. We also recall that the spectral radius $r(X)$ coincides with the operator norm $\|X\|$ when $X\geq O.$ 
\begin{theorem}\label{24}
Let $T\in \mathcal{M}_n$ be  $\left( \alpha ,\beta  \right)$-normal. Then 
\[{{\left\| T \right\|}^{2}}\le \frac{1}{\alpha }\left\| {{T}^{2}} \right\|\text{ and }{{\left\| T \right\|}^{2}}\le \beta \left\| {{T}^{2}} \right\|.\]
Indeed,
\[{{\left\| T \right\|}^{2}}\le \min \left\{ \frac{1}{\alpha },\beta  \right\}\left\| {{T}^{2}} \right\|.\]
\end{theorem}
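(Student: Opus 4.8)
The plan is to reduce the statement to norm estimates for the positive semidefinite matrices $T^{2}T^{*2}=T^{2}(T^{2})^{*}$ and $T^{*2}T^{2}=(T^{2})^{*}T^{2}$, exploiting the identities $\|T^{2}\|^{2}=\|T^{2}T^{*2}\|=\|T^{*2}T^{2}\|$ (for these positive matrices the norm coincides with the spectral radius, as recalled above). The $(\alpha,\beta)$-normality will enter only through the operator inequalities $\alpha^{2}\,T^{*}T\le TT^{*}\le\beta^{2}\,T^{*}T$, which is just a restatement of $\alpha^{2}|T|^{2}\le|T^{*}|^{2}\le\beta^{2}|T|^{2}$.

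For the bound $\|T\|^{2}\le\frac1\alpha\|T^{2}\|$, write $T^{2}T^{*2}=T(TT^{*})T^{*}$ and conjugate $\alpha^{2}\,T^{*}T\le TT^{*}$ by $T$ on the left and $T^{*}$ on the right; since conjugation preserves the order,
\[
\alpha^{2}(TT^{*})^{2}=\alpha^{2}\,T(T^{*}T)T^{*}\le T(TT^{*})T^{*}=T^{2}T^{*2}.
\]
Taking norms (monotone on positive matrices) and using $\|(TT^{*})^{2}\|=\|TT^{*}\|^{2}=\|T\|^{4}$ gives $\alpha^{2}\|T\|^{4}\le\|T^{2}T^{*2}\|=\|T^{2}\|^{2}$, i.e.\ $\|T\|^{2}\le\frac1\alpha\|T^{2}\|$.

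For $\|T\|^{2}\le\beta\|T^{2}\|$, argue symmetrically on the other factorization $T^{*2}T^{2}=T^{*}(T^{*}T)T$: from $TT^{*}\le\beta^{2}\,T^{*}T$ one obtains $\frac1{\beta^{2}}\,T^{*}(TT^{*})T\le T^{*}(T^{*}T)T$, that is $\frac1{\beta^{2}}(T^{*}T)^{2}\le T^{*2}T^{2}$, and passing to norms with $\|(T^{*}T)^{2}\|=\|T\|^{4}$ yields $\frac1{\beta^{2}}\|T\|^{4}\le\|T^{2}\|^{2}$. The final assertion follows by combining the two bounds. There is essentially no obstacle here; the only thing to watch is to pair each of the two factorizations of $\|T^{2}\|^{2}$ with the side of the $(\alpha,\beta)$-normality inequality that, after conjugation, produces $(TT^{*})^{2}$ (resp.\ $(T^{*}T)^{2}$) — the matrix whose norm is exactly $\|T\|^{4}$.
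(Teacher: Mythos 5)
Your proof is correct, and it takes a genuinely different and more elementary route than the paper. You use only the defining inequalities $\alpha^{2}T^{*}T\le TT^{*}\le\beta^{2}T^{*}T$, the fact that conjugation $X\mapsto SXS^{*}$ preserves the order, and the C$^{*}$-identity $\|T^{2}\|^{2}=\|T^{2}T^{*2}\|=\|T^{*2}T^{2}\|$: conjugating the left inequality by $T$ gives $\alpha^{2}(TT^{*})^{2}\le T^{2}T^{*2}$, conjugating the right one by $T^{*}$ gives $\beta^{-2}(T^{*}T)^{2}\le T^{*2}T^{2}$, and norm monotonicity on positive matrices finishes each bound; all steps check out, including the identities $T(T^{*}T)T^{*}=(TT^{*})^{2}$, $T(TT^{*})T^{*}=T^{2}T^{*2}$ and their adjoint-side analogues. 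The paper instead derives the result from its block-matrix machinery: it starts from the positive $2\times 2$ blocks established in Theorem \ref{18}, invokes Ando's characterization that positivity of $\left[\begin{smallmatrix} A & X\\ X^{*} & B\end{smallmatrix}\right]$ yields $X=A^{1/2}KB^{1/2}$ for a contraction $K$, writes $|T|=\frac{1}{\sqrt{\alpha}}|T^{*}|^{1/2}K|T|^{1/2}$, and then uses $r(XY)=r(YX)$, submultiplicativity, and $\bigl\||T|^{1/2}|T^{*}|^{1/2}\bigr\|\le\bigl\||T|\,|T^{*}|\bigr\|^{1/2}=\|T^{2}\|^{1/2}$ (a remark in the paper also gives a variant via $\|A\sharp B\|\le\|A^{1/2}B^{1/2}\|$). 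What the paper's route buys is a demonstration of how the section's block-positivity results feed into norm inequalities; what your route buys is brevity and generality — it bypasses polar decompositions, contractions, and spectral-radius arguments entirely, and works verbatim for bounded operators on a Hilbert space.
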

\begin{proof}
Ando \cite{13} proved that $\left[ \begin{matrix}
   A & X  \\
   {{X}^{*}} & B  \\
\end{matrix} \right]\ge O$ if and only if there exists a contraction $K$ such that $X={{A}^{\frac{1}{2}}}K{{B}^{\frac{1}{2}}}$. It has been revealed in the proof of Theorem \ref{18} that $\left[ \begin{matrix}
   \frac{1}{\sqrt{\alpha }}\left| {{T}^{*}} \right| & \left| T \right|  \\
   \left| T \right| & \frac{1}{\sqrt{\alpha }}\left| T \right|  \\
\end{matrix} \right]\ge O$. Therefore, there exists a contraction $K$ such that $\left| T \right|=\frac{1}{\sqrt{\alpha }}{{\left| {{T}^{*}} \right|}^{\frac{1}{2}}}K{{\left| T \right|}^{\frac{1}{2}}}\ge O$.
So, we have
\[\begin{aligned}
   \left\| \;\left| T \right| \;\right\|&=\left\| T \right\| \\ 
 & =\frac{1}{\sqrt{\alpha }}\left\| {{\left| {{T}^{*}} \right|}^{\frac{1}{2}}}K{{\left| T \right|}^{\frac{1}{2}}} \right\| \\ 
 & =\frac{1}{\sqrt{\alpha }}r\left( {{\left| {{T}^{*}} \right|}^{\frac{1}{2}}}K{{\left| T \right|}^{\frac{1}{2}}} \right) \quad \text{(since $r\left( X \right)=\left\| X \right\|$ for positive $X$)}\\ 
 & =\frac{1}{\sqrt{\alpha }}r\left( K{{\left| T \right|}^{\frac{1}{2}}}{{\left| {{T}^{*}} \right|}^{\frac{1}{2}}} \right) \quad \text{(since $r\left( XY \right)=r\left( YX \right)$)}\\ 
 & \le \frac{1}{\sqrt{\alpha }}\left\| K{{\left| T \right|}^{\frac{1}{2}}}{{\left| {{T}^{*}} \right|}^{\frac{1}{2}}} \right\| \quad \text{(since $r\left( X \right)\le \left\| X \right\|$)}\\ 
 & \le \frac{1}{\sqrt{\alpha }}\left\| K \right\|\left\| {{\left| T \right|}^{\frac{1}{2}}}{{\left| {{T}^{*}} \right|}^{\frac{1}{2}}} \right\| \quad \text{(by the submultiplicative property of usual operator norm)}\\ 
 & \le \frac{1}{\sqrt{\alpha }}\left\| {{\left| T \right|}^{\frac{1}{2}}}{{\left| {{T}^{*}} \right|}^{\frac{1}{2}}} \right\| \quad \text{(since $K$ is contraction)}\\ 
 &\le \frac{1}{\sqrt{\alpha }}{{\left\| \left| T \right|\left| {{T}^{*}} \right| \right\|}^{\frac{1}{2}}}\quad \text{(by \cite[Theorem IX.2.1]{4})}\\
 & =\frac{1}{\sqrt{\alpha }}\left\| {{T}^{2}} \right\|^{\frac{1}{2}}.  
\end{aligned}\]
The second inequality comes from $\left[ \begin{matrix}
   \sqrt{\beta }\left| T \right| & \left| {{T}^{*}} \right|  \\
   \left| {{T}^{*}} \right| & \sqrt{\beta }\left| {{T}^{*}} \right|  \\
\end{matrix} \right]\ge O$
 and the same method as above. This completes the proof.
\end{proof}

\begin{remark}
We will give another method to prove Theorem \ref{24}. The operator inequality
\[\left| T \right|\le \frac{1}{\sqrt{\alpha }}\left( \left| T \right|\sharp\left| {{T}^{*}} \right| \right)\]
implies the following norm inequality
\[\left\| T \right\|\le \frac{1}{\sqrt{\alpha }}\left\| \left| T \right|\sharp\left| {{T}^{*}} \right| \right\|.\]
But, for any positive operators $A,B$, we know that
\[\left\| A\sharp B \right\|\le \left\| {{A}^{\frac{1}{2}}}{{B}^{\frac{1}{2}}} \right\|,\]
i.e.,
\[\left\| T \right\|\le \frac{1}{\sqrt{\alpha }}\left\| {{\left| T \right|}^{\frac{1}{2}}}{{\left| {{T}^{*}} \right|}^{\frac{1}{2}}} \right\|=\frac{1}{\sqrt{\alpha }}\left\| {{T}^{2}} \right\|^{\frac{1}{2}}.\]
\end{remark}

\subsection*{Declarations}
\begin{itemize}
\item {\bf{Availability of data and materials}}: Not applicable.
\item {\bf{Competing interests}}: The authors declare that they have no competing interests.
\item {\bf{Funding}}: Not applicable.
\item {\bf{Authors' contributions}}: Authors declare that they have contributed equally to this paper. All authors have read and approved this version.
\end{itemize}

\vskip 0.3 true cm

{\tiny (H. R. Moradi) Department of Mathematics, Mashhad Branch, Islamic Azad University, Mashhad, Iran
	
\textit{E-mail address:} hrmoradi@mshdiau.ac.ir}

\vskip 0.3 true cm 	

{\tiny (I. H. G\"um\"u\c s) Department of Mathematics, Faculty of Arts and Sciences, Ad\i yaman University, Ad\i yaman, Turkey}

{\tiny \textit{E-mail address:} igumus@adiyaman.edu.tr}

\vskip 0.3 true cm 	

{\tiny (M. Sababheh) Department of Basic Sciences, Princess Sumaya University for Technology, Amman, Jordan}
	
{\tiny\textit{E-mail address:} sababheh@psut.edu.jo}

\end{document}